\documentclass[orivec,runningheads,envcountsame,verbatim]{llncs}

\usepackage{graphicx}
\usepackage{amsmath}
\usepackage{amssymb}

\newcommand{\quantq}{\mathbf{Q}}

 \newcommand{\skilla} {\mbox{\bf \#}}
 \newcommand{\skillb} {\mbox{\bf !}}
 \newcommand{\bang}[1]{\mbox{\bf !}^{#1}}

\newcommand{\xpcp}{\mbox{\small PCP}}

\newcommand{\sigf}[3]{ \Sigma_{#1,#2,#3} }
\newcommand{\msigf}[4]{\Sigma_{#1,#2,#3}^\mathfrak{#4}}

\begin{document}             % End of preamble and beginning of text.

\newcommand{\ccom}[1]{\mbox{\footnotesize{#1}}}

\setlength{\parindent}{0mm}
 \setlength{\parskip}{4pt}

\title{Notes on Fragments of First-Order Concatenation Theory}

\author{Lars Kristiansen\inst{1,2} \and Juvenal Murwanashyaka\inst{1}}

 \institute{
Department of Mathematics, University of Oslo, Norway
\and
    Department of Informatics, University of Oslo, Norway
}

 \maketitle                   % Produces the title.

\newcommand{\xd}{\texttt{D}}

\newcommand{\integer}{\ensuremath{\mathbb Z}}
\newcommand{\rational}{\ensuremath{\mathbb Q}}
\newcommand{\nat}{\ensuremath{\mathbb N}}
\newcommand{\real}{\ensuremath{\mathbb R}}
\newcommand{\kleeneT}{{\mathcal T}}

\newcommand{\kleeneU}{{\mathcal U}}

\begin{abstract}
We identify a number of decidable and undecidable fragments of first-order concatenation theory.
We also give  a purely universal axiomatization which is complete for the fragments we identify.
Furthermore, we prove some normal-form results.
\end{abstract}

\section{Introduction}

\subsection{The Purpose of These Notes}

The purpose of this paper is to give full proofs of results published elsewhere.

\subsection{First-order Concatenation theory} 
First-order concatenation theory can be compared to first-order number theory, e.g.,  Peano Arithmetic or Robinson Arithmetic.
The universe of a standard structure for first-order number theory is the set of natural numbers.
The universe of a standard structure for first-order concatenation theory is a set of strings over some alphabet.
A first-order language for number theory normally contains two binary functions symbols. In a standard structure these 
symbols will be interpreted as addition and multiplication. A first-order language for
  concatenation theory normally contains just one binary function symbol.  In a standard structure this  symbol will be interpreted
as the operator that concatenates  two stings. A classical first-order language for concatenation theory
contains no other non-logical symbols apart from constant symbols.

In this paper we extend concatenation theory with a binary relation symbol and introduce bounded  quantifiers 
analogous to  the bounded quantifiers $(\forall x\leq t)\phi$ and $(\exists x\leq t)\phi$ we know
from number theory. Before we go on and  state our main results, we
will explain some notation and state a few basic definitions.

\subsection{Notation and Basic Definitions}

We will use $\boldsymbol{0}$ and $\boldsymbol{1}$ to denote respectively the bits zero and  one,
and we use pretty standard notation when we work with bit strings: $\{ \boldsymbol{0},  \boldsymbol{1}\}^*$ denotes the set of all finite bit strings; $|b|$ denotes the 
length of the bit string $b$; $(b)_i$ denotes the $i^{\mbox{{\scriptsize th}}}$ bit of the bit string $b$; and 
$\boldsymbol{0}\boldsymbol{1}^3\boldsymbol{0}^2\boldsymbol{1}$ denotes the bit string $\boldsymbol{0}\boldsymbol{1}\boldsymbol{1}\boldsymbol{1}\boldsymbol{0}\boldsymbol{0}\boldsymbol{1}$.
The set $\{ \boldsymbol{0},  \boldsymbol{1}\}^*$
contains the empty string which we will denote $\varepsilon$.

Let $\mathcal{L}_{BT}$ denote the first-order language that consist
of the constants symbols $e,0,1$, the binary function symbol $\circ$
and the binary relation symbol $\sqsubseteq$.
We will consider two $\mathcal{L}_{BT}$-structures named  $\mathfrak{B}$ and $\mathfrak{D}$.

The universe of $\mathfrak{B}$ is the set $\{ \boldsymbol{0},  \boldsymbol{1}\}^*$. 
The constant symbol $0$ is interpreted as the string containing nothing but the bit $\boldsymbol{0}$, 
and the constant symbol $1$ is interpreted as the string
containing nothing but the bit $\boldsymbol{1}$, that is, $0^{\mathfrak{B}} = \boldsymbol{0}$ and  
$1^{\mathfrak{B}} = \boldsymbol{1}$. The constant symbol $e$ is interpreted as the empty string, that is, $e^{\mathfrak{B}} = \varepsilon$. 
Moreover, $\circ^\mathfrak{B}$ is
the function that concatenates two strings 
(e.g. $\boldsymbol{0}\boldsymbol{1} \circ^\mathfrak{B}  \boldsymbol{0}\boldsymbol{0}\boldsymbol{0} 
=  \boldsymbol{0}\boldsymbol{1}\boldsymbol{0}\boldsymbol{0}\boldsymbol{0}$  and
$ \varepsilon \circ^\mathfrak{B}  \varepsilon    = \varepsilon$). Finally, $\sqsubseteq^\mathfrak{B}$ is the substring relation,
that is, $u \sqsubseteq^\mathfrak{B} v$ iff there exists bit strings $x,y$ such that $xuy=v$. 

The structure $\mathfrak{D}$ is the same structure as $\mathfrak{B}$ with one exception: the relation
$u \sqsubseteq^\mathfrak{D} v$ holds iff $u$ is a prefix of $v$, that is, iff 
 there exists a bit string $x$ such that $ux=v$. To improve the readability we will use the  symbol $\preceq$
in place of the symbol $\sqsubseteq$ when we are working in the structure  $\mathfrak{D}$. Thus, $u \sqsubseteq v$
should be read as ``$u$ is a substring of $v$'', whereas  $u \preceq v$
should be read as ``$u$ is a prefix of $v$''. When we do not have a particular structure in mind, e.g.
when we deal with syntactical matters, we will stick to the symbol $\sqsubseteq$.

We  introduce the {\em bounded quantifiers} $(\exists x  \sqsubseteq t) \alpha$ and $(\forall x  \sqsubseteq t) \alpha$
as syntactical abbreviations for receptively $(\exists x)[x  \sqsubseteq t \, \wedge \, \alpha]$
and $(\forall x)[x  \sqsubseteq t \, \rightarrow \, \alpha]$ ($x$ is of course not allowed to occur in the term $t$), 
and we define the $\Sigma$-formulas inductively
by
\begin{itemize}
\item $\alpha$ and $\neg \alpha$ are $\Sigma$-formulas if $\alpha$ is  of the form $s\sqsubseteq t$ or of the form 
$s = t$ where $s$ and $t$ are terms
\item $\alpha \vee \beta$ and $\alpha \wedge \beta$ are  $\Sigma$-formulas if $\alpha$ and $\beta$ are $\Sigma$-formulas
\item  $(\exists x  \sqsubseteq t) \alpha$ and $(\forall x  \sqsubseteq t) \alpha$ and $(\exists x) \alpha$ are 
 $\Sigma$-formulas if $\alpha$ is a $\Sigma$-formula.
\end{itemize}

We assume that the reader notes the similarities with first-order number theory.
The formulas that correspond to $\Sigma$-formulas in number theory are often called $\Sigma_1$-formulas or $\Sigma^0_1$-formulas.
Next we introduce the  biterals. The biterals correspond to the numerals of first-order number theory.
Let $b$ be a bit string. We define the {\em biteral} $\overline{b}$ by 
$\overline{\varepsilon} = e$, $\overline{b\boldsymbol{0}} = \overline{b} \circ \boldsymbol{0}$ and $\overline{b\boldsymbol{1}} = \overline{b} \circ \boldsymbol{1}$.

A $\Sigma$-formula
$\phi$ is called a $\sigf{n}{m}{k}$-formula if it contains $n$ unbounded existential quantifiers, $m$ bounded existential quantifiers and $k$ bounded universal quantifiers. A {\em sentence} is a formula with no free variables. 
The {\em fragment} $\msigf{n}{m}{k}{B}$ ($\msigf{n}{m}{k}{D}$) is the set of $\sigf{n}{m}{k}$-sentences 
true in $\mathfrak{B}$ (respectively, $\mathfrak{D}$).

To improve the readability we may 
 skip the operator $\circ$ in first-order formulas and simply write $st$ in place of $s\circ t$.
Furthermore,
we will  occasionally contract quantifiers and 
write, e.g., $\forall w_{1}, w_{2} \sqsubseteq u[\phi]$ in place of $(\forall w_{1} \sqsubseteq u)(\forall  w_{2} \sqsubseteq u)\phi$, and 
for $\sim \, \in \! \{\preceq, \sqsubseteq, =\}$, we  will sometimes write $s\not \sim t$ in place of $\neg s\sim t$.

\subsection{Main Results and Related Work}

We prove that the fragment  $\msigf{0}{m}{k}{B}$ is decidable (for any $m,k\in\nat$), and we prove that
$\msigf{1}{2}{1}{B}$ and $\msigf{1}{0}{2}{B}$ are undecidable. Furthermore, we prove that the fragments
 $\msigf{0}{m}{k}{D}$ and  $\msigf{n}{m}{0}{D}$ are decidable (for any $n,m,k\in\nat$), and we prove that
 $\msigf{3}{0}{2}{D}$  and  $\msigf{4}{1}{1}{D}$  are undecidable.
Our results on decidable fragments are corollaries of theorems that have an interest in their own right:
We prove the existence of   normal forms, and we give a purely universal axiomatization of concatenation theory which is
$\Sigma$-complete.

Recent related work can be found in  Halfon et al.\ \cite{halfon}, 
Day et al.\ \cite{day},  Ganesh et al.\ \cite{ganesh}, Karhum\"aki et al.\ \cite{karh}  and
several other places, see Section 6 of \cite{ganesh} for further references.

The material in Section 8 of the textbook Leary \& Kristiansen \cite{leary} is also
 related to the research presented in this paper.
So is a series of papers that starts with with Grzegorczyk \cite{grz} 
and includes Grzegorczyk \& Zdanowski \cite{zd}, Visser \cite{visser} and
Horihata \cite{hori}.
These papers deal with the essential undecidability\footnote{A first-order theory is {\em essentially undecidable}
 if the theory---and every extension of the theory---is undecidable. Tarski \cite{utarski} is a very readable introduction
to the subject.} of various first-order theories of concatenation.
The relationship between the various axiomatizations of concatenation theory we find in these  papers and the axiomatization we
give below has not yet been investigated.

The theory of concatenation seems to go back to 
work of Tarski \cite{tarski} and Quine \cite{quine}, see Visser \cite{visser} for a brief account of its history.

\section{$\Sigma$-complete Axiomatizations}

\begin{definition}
The first-order theory $B$ contains the following eleven non-logical axioms:
\begin{enumerate} 
\item $\forall x[ \ x= e  x \wedge x=x  e \ ] $
\item $\forall x y z [ \ (x  y)  z = x  (y   z) \ ]$
\item $\forall x y[ \ ( x \neq y) \to 
( \ ( x   0 \neq y   0)  \wedge ( x   1 \neq y   1) \ ) \ ] $
\item $ \forall x y [ \ x   0 \neq y   1 \ ] $
\item $\forall x [ \ x \sqsubseteq e \leftrightarrow x=e \ ] $
\item $\forall x[ \ x \sqsubseteq 0 \leftrightarrow (x=e \vee x = 0) \ ] $
\item $\forall x [ \ x \sqsubseteq 1 \leftrightarrow (x=e \vee x = 1) \ ] $
\item $ \forall x y [ \ x \sqsubseteq 0  y   0 \leftrightarrow (x = 0  y   0 \vee x \sqsubseteq 0  y  \vee x \sqsubseteq y  0 ) \ ]     $
\item $\forall x y [ \ x \sqsubseteq 0  y   1 \leftrightarrow (x = 0  y   1 \vee x \sqsubseteq 0  y  \vee x \sqsubseteq y  1 ) \ ]     $
\item $ \forall x  y [ \ x \sqsubseteq 1  y   0 \leftrightarrow (x = 1  y   0 \vee x \sqsubseteq 1  y  \vee x \sqsubseteq y  0 ) \ ]     $
\item $ \forall x y [ \ x \sqsubseteq 1  y   1 \leftrightarrow (x = 1  y   1 \vee x \sqsubseteq 1  y  \vee x \sqsubseteq y  1 ) \ ]     $
\end{enumerate} 
We will use $B_i$ to refer to the $i^{\mbox{{\scriptsize th}}}$ axiom of $B$.
\end{definition}

\begin{theorem}[$\Sigma$-completeness of $B$]\label{bsigcomp}
For any $\Sigma$-sentence $\phi$, we have
$$
 \mathfrak{B} \models \phi  \; \Rightarrow \; B\vdash \phi \; .
$$
\end{theorem}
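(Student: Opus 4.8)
The base case concerns atomic sentences $s = t$ and $s \sqsubseteq t$ and their negations, where $s,t$ are closed terms. Since every closed term evaluates in $\mathfrak{B}$ to some concrete bit string, the first task is to show that $B$ proves the value of each closed term is equal to the corresponding biteral: that is, if the term $t$ evaluates to the string $b$ in $\mathfrak{B}$, then $B \vdash t = \overline{b}$. This follows by a routine induction on the term using axioms $B_1$ (identity) and $B_2$ (associativity), which let us normalize any parenthesization of $\circ$ applied to $e,0,1$ into a canonical biteral. Having reduced to biterals, I would handle the atomic and negated-atomic cases separately.

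\emph{For the equality atoms,} if $\mathfrak{B} \models \overline{b} = \overline{c}$ then $b = c$ syntactically, so $B \vdash \overline{b} = \overline{c}$ trivially by reflexivity; if $\mathfrak{B} \models \overline{b} \neq \overline{c}$ then $b \neq c$, and I would prove $B \vdash \overline{b} \neq \overline{c}$ by induction using axioms $B_3$ and $B_4$, which are exactly the ``$\circ$ is left-injective on the last bit'' and ``a string ending in $0$ differs from one ending in $1$'' principles needed to propagate disequality bit by bit. \emph{For the substring atoms,} the key observation is that axioms $B_5$ through $B_{11}$ give a complete case analysis of $x \sqsubseteq \overline{b}$ driven by the last bit of $b$: they let me decide, provably in $B$, whether $\overline{a} \sqsubseteq \overline{b}$ holds by recursion on the structure of $b$. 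So when $\mathfrak{B} \models \overline{a} \sqsubseteq \overline{b}$ I can build the positive derivation using the right-to-left directions of $B_8$–$B_{11}$ (together with $B_5$–$B_7$ at the base), and when $\mathfrak{B} \models \overline{a} \not\sqsubseteq \overline{b}$ I use the left-to-right directions together with the already-established provable disequalities to exclude every disjunct.

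\emph{The inductive step is comparatively easy.} The connectives $\wedge$ and $\vee$ are immediate: if $\mathfrak{B} \models \alpha \wedge \beta$ then both conjuncts are true, so by the induction hypothesis $B \vdash \alpha$ and $B \vdash \beta$, hence $B \vdash \alpha \wedge \beta$; the disjunction case picks the true disjunct. For an unbounded existential $(\exists x)\alpha$ true in $\mathfrak{B}$, there is a witness string $w$, so $\mathfrak{B} \models \alpha[x := \overline{w}]$, and the induction hypothesis gives $B \vdash \alpha[x := \overline{w}]$, from which $B \vdash (\exists x)\alpha$ by existential generalization. The bounded existential $(\exists x \sqsubseteq t)\alpha$ unfolds to an unbounded existential and is handled the same way, using the provable atom $\overline{w} \sqsubseteq t$ from the base case for the witness. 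The one genuinely delicate step is the \textbf{bounded universal quantifier} $(\forall x \sqsubseteq t)\alpha$: here $B$ must prove $\alpha$ holds for \emph{all} substrings of the value of $t$ simultaneously, and since there is no induction axiom available, I cannot argue schematically. The plan is to prove provably in $B$ that $x \sqsubseteq \overline{b}$ forces $x$ to equal one of the finitely many biterals $\overline{a_1}, \dots, \overline{a_k}$ naming the actual substrings of $b$ — this is precisely what the recursive substring axioms $B_5$–$B_{11}$ deliver — and then discharge the universal by checking $B \vdash \alpha[x := \overline{a_i}]$ for each of these finitely many cases via the induction hypothesis. I expect establishing this ``finite exhaustion of substrings'' lemma, namely that $B \vdash \forall x[\, x \sqsubseteq \overline{b} \rightarrow (x = \overline{a_1} \vee \cdots \vee x = \overline{a_k})\,]$, to be the main obstacle, and it is the crux on which the whole bounded-universal case rests.
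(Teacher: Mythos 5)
Your proposal is correct, and its skeleton --- normalize closed terms to biterals via $B_1$/$B_2$, settle the four kinds of (negated) atoms by meta-induction on biterals, then induct on the structure of $\phi$ with biteral witnesses for the existential quantifiers --- is the same as the paper's. The one place where you diverge is the case you yourself call the crux, the bounded universal quantifier, and there your key lemma differs from the paper's. You propose the finite-exhaustion statement $B \vdash \forall x[\,x \sqsubseteq \overline{b} \rightarrow (x = \overline{a_1} \vee \cdots \vee x = \overline{a_k})\,]$, where $a_1,\ldots,a_k$ enumerate the actual substrings of $b$, and then discharge the quantifier by proving the finitely many instances $\alpha(\overline{a_i})$ from the induction hypothesis. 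The paper instead proves a distribution lemma (Lemma~\ref{lemmaafire}): if truth of $\phi(\overline{b})$ implies provability for every biteral $b$, then truth of $(\forall x \sqsubseteq \overline{b})\phi(x)$ implies its provability; this is shown by induction on $b$, where for instance when $b$ has the form $\boldsymbol{0}b'\boldsymbol{0}$, axiom $B_8$ reassembles $(\forall x \sqsubseteq \overline{b})\phi$ from the two shorter bounded universals (over $\overline{\boldsymbol{0}b'}$ and $\overline{b'\boldsymbol{0}}$) plus the single instance $\phi(\overline{b})$, without ever naming the substrings. Both routes rest on exactly the same meta-induction along $B_5$--$B_{11}$, so your exhaustion lemma is provable just as you expect; what it buys you is reusability, since the positive and negative substring-atom cases (the paper's Lemmas~\ref{lemmaatre} and~\ref{lemmaafem}) drop out of it as corollaries, at the cost of carrying an explicit (quadratically long) disjunction through the argument, whereas the paper's formulation is more economical and stays closer to the shape of the axioms. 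One small point to repair: in your disequality case, the base of the induction needs $B \vdash \forall x[\,x0 \neq e \wedge x1 \neq e\,]$, which is not an instance of $B_3$ or $B_4$ alone; the paper derives it from $B_1$, $B_2$ and $B_4$ (Lemma~\ref{gammelbtre}), and your sketch should include that auxiliary derivation.
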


\begin{proof} (Sketch)
Prove (by induction on the structure of $t$) that there for any variable-free $\mathcal{L}_{BT}$-term $t$ exists a biteral $b$ such that 
\begin{align}
 \mathfrak{B} \models t=b  \; \Rightarrow \; B\vdash t=b\; . \label{eeea}
\end{align}
Prove (by induction on the structure of $b_2$) that we for any biterals $b_1$ and $b_2$ have
\begin{align}
 \mathfrak{B} \models  b_1\neq b_2  \; \Rightarrow \; B\vdash  b_1 \neq b_2 \; . \label{eeeb}
\end{align}
Use  $B\vdash \forall x[ x0\neq e \, \wedge \, x1\neq e]$ when proving (2).
Furthermore, prove 
 (by induction on the structure of $b_2$) that we for any biterals $b_1$ and $b_2$ have
\begin{align}
 \mathfrak{B} \models  b_1\sqsubseteq  b_2  \; \Rightarrow \; B\vdash  b_1 \sqsubseteq b_2  
\;\;\;\; \mbox{ and } \;\;\;\;  \mathfrak{B} \models  b_1\not\sqsubseteq b_2  \; \Rightarrow \; B\vdash  b_1 \not\sqsubseteq b_2 \; . \label{eeec}
\end{align}
It follows from (\ref{eeea}), (\ref{eeeb}) and (\ref{eeec}) that we have
\begin{align}
 \mathfrak{B} \models  \phi  \; \Rightarrow \; B\vdash  \phi \; . \label{eeed}
\end{align}
for any $\phi$  of one of the four forms $t_1=t_2$, $t_1\neq t_2$, $t_1 \sqsubseteq t_2$, and $t_1 \not\sqsubseteq t_2$ where 
$t_1$ and $t_2$ are variable-free terms.

Use induction on the structure of $b$ to prove the following claim:
\begin{quote} 
\textit{If $\phi(x)$ is an $\mathcal{L}_{BT}$-formula such that we have
$\mathfrak{B} \models \phi(b)  \Rightarrow  B \vdash \phi(b)$ 
for any biteral $b$, 
then we also have
$$\mathfrak{B} \models (\forall x \sqsubseteq b)\phi(x)  \Rightarrow 
B \vdash (\forall x \sqsubseteq b)\phi(x)$$
for any biteral $b$.}
\end{quote}

Finally, prove (by induction on the structure of $\phi$)   that we for any $\Sigma$-sentence $\phi$ have
 $\mathfrak{B} \models \phi  \Rightarrow  B \vdash \phi$. 
Use (\ref{eeed}) in the base cases, that is, when $\phi$ is an atomic sentence or a negated atomic sentence.
Use the claim and (1) in the case $\phi$ is of the form $(\forall x \sqsubseteq t)\psi$. The remaining cases are rather straightforward.
\qed
\end{proof}

A detailed proof of Theorem \ref{bsigcomp} can be found in Section \ref{bsigcompproof}.

\begin{definition}
The first-order theory $D$ contains the following seven non-logical axioms:
\begin{enumerate} 
\item[-] the first four axioms are the same as the first four axioms of $B$
\item[5.] $\forall x[ \ x \preceq e \leftrightarrow x=e \ ]$
\item[6.] $ \forall x y [ \  x \preceq y \circ 0 \leftrightarrow 
( x = y \circ 0 \vee x \preceq y) \ ]$ 
\item[7.] $\forall x y [ \  x \preceq y \circ 1  \leftrightarrow 
(x = y \circ 1 \vee x \preceq y) \ ]$
\end{enumerate}
We will use $D_i$ to refer to the $i^{\mbox{{\scriptsize th}}}$ axiom of $D$.
\end{definition}

The proof of the next theorem can be found in Section \ref{dsigcompproof}.
More material related to the theories $B$ and $D$
can   be found in Chapter 8 of Leary \& Kristiansen \cite{leary}.

\begin{theorem}[$\Sigma$-completeness of $D$]\label{dsigcomp}
For any $\Sigma$-sentence $\phi$, we have
$$
  \mathfrak{D} \models \phi \; \Rightarrow \;  D\vdash \phi\; .
$$
\end{theorem}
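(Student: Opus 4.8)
The plan is to follow the proof of Theorem~\ref{bsigcomp} line for line, replacing $\mathfrak{B}$, $B$ and $\sqsubseteq$ by $\mathfrak{D}$, $D$ and $\preceq$ throughout. Since the axioms $D_1$--$D_4$ are literally the axioms $B_1$--$B_4$, the two lemmas that do not mention the relation symbol carry over verbatim: by induction on the structure of $t$ every variable-free term is provably equal to a biteral (the analogue of (\ref{eeea})), and by induction on the structure of $b_2$ distinct biterals are provably distinct (the analogue of (\ref{eeeb})). In particular the auxiliary fact $D \vdash \forall x[\,x0 \neq e \wedge x1 \neq e\,]$ needed in the latter is obtained exactly as for $B$: assuming $x0 = e$ one rewrites $1 = 1e = 1(x0) = (1x)0$ using $D_1$ and $D_2$, together with $1 = e1$ from $D_1$, so that $(1x)0 = e1$, contradicting $D_4$; the case $x1 = e$ is symmetric.

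The only genuinely new work is the analogue of (\ref{eeec}), namely that for biterals $b_1, b_2$ one has $\mathfrak{D} \models b_1 \preceq b_2 \Rightarrow D \vdash b_1 \preceq b_2$ together with $\mathfrak{D} \models b_1 \not\preceq b_2 \Rightarrow D \vdash b_1 \not\preceq b_2$. I would prove this by induction on the structure of $b_2$, and here the situation is in fact cleaner than for $\mathfrak{B}$, because the three prefix axioms $D_5$, $D_6$, $D_7$ recurse on the last bit of $b_2$ exactly as biterals are built. If $b_2 = e$, then $D_5$ reduces both directions to the equality or inequality of $b_1$ and $e$, already settled above. If $b_2 = b_2' \circ 0$, then in $\mathfrak{D}$ we have $b_1 \preceq b_2$ iff $b_1 = b_2$ or $b_1 \preceq b_2'$; using the forward and backward directions of the biconditional $D_6$, together with the induction hypothesis for $b_2'$ and the already-established equality and inequality facts, both the positive and the negative statement follow. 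The case $b_2 = b_2' \circ 1$ is identical with $D_7$ in place of $D_6$. From the three biteral lemmas one obtains, as in the proof of Theorem~\ref{bsigcomp}, the analogue of (\ref{eeed}) for all four atomic and negated-atomic shapes. The bounded-quantifier claim then goes through as before: its base case $(\forall x \preceq e)\phi(x)$ uses $D_5$ to collapse $x$ to $e$, and the steps $b = b'\circ 0$ and $b = b' \circ 1$ use $D_6$ and $D_7$ to split a prefix of $b$ into ``equal to $b$'' or ``a prefix of $b'$''. Finally, the induction on the structure of $\phi$ is unchanged, using the analogue of (\ref{eeed}) in the atomic base cases, the claim together with $D_1$ in the case $(\forall x \preceq t)\psi$, and the positive prefix fact in the $(\exists x \preceq t)\psi$ case.

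I expect the main obstacle to be the negative half of the prefix lemma, $\mathfrak{D} \models b_1 \not\preceq b_2 \Rightarrow D \vdash b_1 \not\preceq b_2$: this is where one must check that the three weak prefix axioms of $D$ really suffice in place of the seven substring axioms of $B$. Concretely, one has to verify that each $\not\preceq$ between biterals is refuted by driving the left-to-right direction of $D_5$, $D_6$ or $D_7$ down the structure of $b_2$ until it bottoms out in an inequality of biterals supplied by the analogue of (\ref{eeeb}); keeping the case analysis and the bookkeeping of which biteral plays the role of $b_2'$ straight is the only delicate point, and everything else is routine.
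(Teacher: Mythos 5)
Your proposal is correct and takes essentially the same route as the paper: the paper likewise reuses the equality and inequality lemmas verbatim (since $D_1$--$D_4$ coincide with $B_1$--$B_4$) and proves the positive prefix, negative prefix, and bounded-universal-quantifier lemmas by induction on the biteral $b_2$ (resp.\ $b$), with the cases $e$, $b_2'\circ 0$, $b_2'\circ 1$ handled by $D_5$, $D_6$, $D_7$ respectively, before running the same final induction on the structure of $\phi$. The negative prefix lemma, which you flag as the delicate point, is dispatched in the paper exactly as you sketch: the biconditional $D_6$ (or $D_7$) reduces $b_1 \preceq b_2'\circ 0$ to $b_1 = b_2'\circ 0 \,\vee\, b_1 \preceq b_2'$, and both disjuncts are refuted using the inequality lemma and the induction hypothesis.
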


\begin{corollary}
The fragments $\msigf{0}{m}{k}{B}$ and $\msigf{0}{m}{k}{D}$ are decidable (for any $m,k\in\nat$).
\end{corollary}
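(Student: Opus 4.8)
The plan is to deduce decidability from the $\Sigma$-completeness of $B$ and $D$ (Theorems \ref{bsigcomp} and \ref{dsigcomp}), the essential point being that a sentence with no unbounded existential quantifiers has a negation that is again a $\Sigma$-sentence. First I would observe that every $\sigf{0}{m}{k}$-sentence is by definition a $\Sigma$-sentence. Next I would push negations inward: using $\neg(\exists x\sqsubseteq t)\alpha\equiv(\forall x\sqsubseteq t)\neg\alpha$, the dual identity, and De Morgan for $\vee,\wedge$, every negation migrates down to the atoms $s=t$ and $s\sqsubseteq t$, where $\neg(s=t)$ and $\neg(s\sqsubseteq t)$ are themselves admissible negated atoms in the $\Sigma$-grammar. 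The reason this stays inside the $\Sigma$-class is precisely that $n=0$: with no unbounded existential quantifier present, no unbounded universal quantifier is ever produced by the dualisation. Hence for any $\sigf{0}{m}{k}$-sentence $\phi$, both $\phi$ and an equivalent form of $\neg\phi$ are $\Sigma$-sentences (the latter being a $\sigf{0}{k}{m}$-sentence).

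The decision procedure then runs as follows. Given such a $\phi$, exactly one of $\mathfrak{B}\models\phi$ and $\mathfrak{B}\models\neg\phi$ holds. By Theorem \ref{bsigcomp} the true one is provable in $B$, so I would search (by dovetailing) for a $B$-derivation of $\phi$ and a $B$-derivation of $\neg\phi$; this search is guaranteed to terminate. To read off the correct answer I use the soundness of $B$ for $\mathfrak{B}$: a routine check shows $\mathfrak{B}\models B_i$ for each $i$, so $B\vdash\psi$ implies $\mathfrak{B}\models\psi$. Consequently $B$ proves at most one of $\phi,\neg\phi$, and the derivation the search returns tells us whether $\phi$ is true. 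The argument for $\msigf{0}{m}{k}{D}$ is identical, replacing $\sqsubseteq$ by $\preceq$, $B$ by $D$, $\mathfrak{B}$ by $\mathfrak{D}$, and Theorem \ref{bsigcomp} by Theorem \ref{dsigcomp}.

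The main obstacle---and the only place where the hypothesis $n=0$ is used---is the first step: ensuring that the negation of the input sentence is still a $\Sigma$-sentence, so that $\Sigma$-completeness can be applied to \emph{both} $\phi$ and $\neg\phi$ rather than to $\phi$ alone. For fragments with $n\geq 1$ this fails, since negating an unbounded existential produces an unbounded universal, which lies outside the $\Sigma$-class; that is exactly why the undecidable fragments identified in the paper all have $n\geq 1$. I would also remark that one can alternatively avoid appealing to $\Sigma$-completeness entirely: since every quantifier is bounded and the outermost bound is variable-free, each bounded quantifier ranges over the finitely many substrings (respectively prefixes) of the string denoted by its bound, so $\phi$ can be evaluated directly by a terminating finite search. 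This yields a self-contained decision procedure and makes the complexity transparent, but the proof via $\Sigma$-completeness is the one that fits the corollary as stated.
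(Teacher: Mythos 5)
Your proof is correct and follows essentially the same route as the paper: observe that the negation of a $\sigf{0}{m}{k}$-sentence is logically equivalent (by De Morgan duality for bounded quantifiers) to a $\sigf{0}{k}{m}$-sentence, apply $\Sigma$-completeness to both $\phi$ and this equivalent of $\neg\phi$, and decide by enumerating derivations. Your explicit soundness check and the closing remark about direct finite evaluation are sensible additions, but they do not change the substance of the argument, which matches the paper's proof.
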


\begin{proof}
We prove that $\msigf{0}{m}{k}{B}$ is decidable. Let $\phi$ be a $\sigf{0}{m}{k}$-formula.  The negation of a $\sigf{0}{m}{k}$-formula
is logically equivalent to a $\sigf{0}{k}{m}$-formula (by De Morgan's laws).
We can compute a  $\sigf{0}{k}{m}$-formula $\phi'$ which is logically equivalent to $\neg \phi$.
 By Theorem \ref{bsigcomp},
we have  $B\vdash \phi$ if $\mathfrak{B}\models \phi$, and we have $B\vdash \phi'$ if $\mathfrak{B}\models \neg \phi$. 
The set of formulas derivable from
the axioms of $B$ is computably enumerable. Hence it is decidable if  $\phi$ is true in $\mathfrak{B}$. The proof that the fragment $\msigf{0}{m}{k}{D}$
is decidable is similar.
\qed
\end{proof}

\section{Normal Forms}

Some of the lemmas below  are based on results and proofs  found in Senger \cite{senger} and B\"uchi \& Senger \cite{bs}.
They  prove that any $\Sigma$-formula in the language $\{\circ, 0,1,e\}$ is equivalent in $\mathfrak{B}|_{\{\circ, 0,1,e\}}$
to a formula of the form
$(\exists v_0)\ldots (\exists v_k)(s=t)$.

\begin{lemma}\label{sulten}
Let $\mathfrak{A} \in \lbrace \mathfrak{B}, \mathfrak{D} \rbrace$, and let $s_{1}, s_{2}, t_{1}, t_{2}$ be $\mathcal{L}_{BT}$-terms. 
We have 
$$\mathfrak{A} \models  (s_{1} = t_{1} \wedge s_{2} = t_{2}) \; \leftrightarrow \;
 s_{1}  0  s_{2}  s_{1}  1  s_{2} = t_{1}  0  t_{2}  t_{1}  1  t_{2}\; .$$ 
\end{lemma}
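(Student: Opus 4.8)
The plan is to observe first that the relation symbol plays no role here: the displayed biconditional mentions only $\circ$ and $=$, and $\mathfrak{B}$ and $\mathfrak{D}$ share the same universe, the same constants $e,0,1$, and the same interpretation of $\circ$. So I can treat both structures uniformly and argue purely about bit strings. I would fix an arbitrary assignment to the free variables and let $a,b,c,d$ be the strings denoted by $s_1,s_2,t_1,t_2$; the task then reduces to proving that $a\,0\,b\,a\,1\,b = c\,0\,d\,c\,1\,d$ holds if and only if $a=c$ and $b=d$. The direction $a=c \wedge b=d \Rightarrow (a\,0\,b\,a\,1\,b = c\,0\,d\,c\,1\,d)$ is immediate by substitution, so all the content lies in the converse.

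For the converse I would start with the cheap length information: comparing $|a\,0\,b\,a\,1\,b|$ with $|c\,0\,d\,c\,1\,d|$ gives $|a|+|b| = |c|+|d|$. Since the equation is symmetric under swapping the pair $(a,b)$ with $(c,d)$, I may assume without loss of generality that $|a|\le |c|$. Then $a$ and $c$ are both prefixes of the common string $w := a\,0\,b\,a\,1\,b = c\,0\,d\,c\,1\,d$, so $a$ is a prefix of $c$. If $|a|=|c|$ then $a=c$, and cancelling $a$ from the front (together with the $0$-delimiter) and using $|b|=|d|$ yields $b=d$, finishing this case. The remaining task is to rule out $|a|<|c|$.

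The case $|a|<|c|$ is the crux, and it is exactly here that the two distinct delimiters $0$ and $1$ earn their keep. The plan is to pin down a single character of $c$ to two incompatible values. On the one hand, the character of $w$ in position $|a|+1$ is the first delimiter $0$; reading $w$ as $c\,0\,d\,c\,1\,d$ and using $|a|<|c|$, this position falls inside the first copy of $c$, so $(c)_{|a|+1}=\boldsymbol{0}$. On the other hand, the second delimiter $1$ occupies position $2|a|+|b|+2$ of $w$; using $|c|+|d|=|a|+|b|$ one checks that this position lies inside the \emph{second} copy of $c$ and corresponds to offset $|a|+1$ within it, forcing $(c)_{|a|+1}=\boldsymbol{1}$. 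This contradiction eliminates $|a|<|c|$ and completes the proof.

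I expect the one genuinely delicate step to be the positional bookkeeping in the last paragraph, namely verifying that the $1$-delimiter indeed lands within the second occurrence of $c$ at precisely offset $|a|+1$; this rests on the identity $|c|+|d|=|a|+|b|$ and on the strict inequality $|a|<|c|$, and it is the point at which having a $0$ before the first copy of $a$ and a $1$ before the second copy is essential. Everything else is routine prefix-cancellation.
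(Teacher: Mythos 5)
Your proof is correct and takes essentially the same route as the paper's: compare lengths, split into the cases $|s_1|=|t_1|$ and (w.l.o.g.) $|s_1|<|t_1|$, and in the latter case derive a contradiction by pinning the character at offset $|s_1|+1$ inside $t_1$ to both $\boldsymbol{0}$ and $\boldsymbol{1}$ via the two delimiters. The only cosmetic difference is that the paper first uses the length equality to cut the common string into its two equal halves $s_1 \boldsymbol{0} s_2 = t_1 \boldsymbol{0} t_2$ and $s_1 \boldsymbol{1} s_2 = t_1 \boldsymbol{1} t_2$, which makes the positional bookkeeping you do for the second delimiter immediate.
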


\begin{proof}
Assume $s_{1}  \boldsymbol{0}  s_{2}  s_{1}  \boldsymbol{1}  s_{2} = t_{1}  \boldsymbol{0}  t_{2}  t_{1}  \boldsymbol{1}  t_{2}$.
Then $|s_{1}  \boldsymbol{0}  s_{2}| = |t_{1}  \boldsymbol{0}  t_{2}|$ and $|s_{1}  \boldsymbol{1} \ s_{2}| = |t_{1}  \boldsymbol{1}  s_{2}|$.
The proof splits into the two cases $|s_1|=|t_1|$ and $|s_1|\neq |t_1|$. In the case when $|s_1|=|t_1|$, we obviously have $s_1=t_1$ and $s_2=t_2$.
Assume  $|s_1|\neq |t_1|$. We can w.l.o.g. assume that $|s_1|< |t_1|$. This implies that 
$$  \boldsymbol{0} \; = \; (s_{1}  \boldsymbol{0}  s_{2})_{|s_1| + 1}  \; = \; (t)_{|s_1| + 1} \; = \; (s_{1}  \boldsymbol{1}  s_{2})_{|s_1| + 1} \; = \;  \boldsymbol{1}\; .$$
This is a contradiction. This proves  the implication from the right to the 
left. The converse implication is obvious.
\qed
\end{proof}

\begin{lemma}\label{firecola}
Let $s_{1},s_{1},t_{1},t_{2}$  be $\mathcal{L}_{BT }$-terms.         
There exist $\mathcal{L}_{BT }$-terms $s ,t$ and variables $v_1,\ldots, v_k$ such that 
$$\mathfrak{D} \models (s_{1}\preceq t_{1} \vee s_{2} \preceq t_{2}) \leftrightarrow 
\exists v_{1} \ldots \exists v_{k}[s = t ]\; .$$
\end{lemma}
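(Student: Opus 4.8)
The plan is to eliminate the relation symbol $\preceq$ first and then to reduce the resulting disjunction of two equations to a single existential equation. Since $\preceq^{\mathfrak{D}}$ is the prefix relation, we have $\mathfrak{D}\models (s_i\preceq t_i)\leftrightarrow \exists x_i[s_i x_i = t_i]$ for $i\in\{1,2\}$, and hence
$$\mathfrak{D}\models (s_1\preceq t_1\vee s_2\preceq t_2)\;\leftrightarrow\;\exists x_1\exists x_2[(s_1 x_1 = t_1)\vee(s_2 x_2 = t_2)]\;.$$
The matrix is now a disjunction of two equations in the sublanguage $\{\circ,0,1,e\}$, and every equation is interpreted identically in $\mathfrak{D}$ and in $\mathfrak{B}$, since the two structures differ only in the interpretation of $\sqsubseteq$. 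Thus it suffices to work in the free-monoid reduct and to show there that a disjunction of two equations is equivalent to a single existential equation; the equivalence then transfers back to $\mathfrak{D}$ verbatim.

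The heart of the argument is this disjunction-elimination step, which is exactly the kind of coding carried out by Senger~\cite{senger} and by B\"uchi \& Senger~\cite{bs}. Following them, I would construct terms $s,t$ together with fresh variables so that the single equation $s=t$ admits a solution in precisely two ``modes'': a selector variable, pinned by the constants $0$ and $1$, forces the alignment of the two sides in one of two ways, and in each way the equation collapses to one of the two disjuncts $s_1x_1=t_1$ or $s_2x_2=t_2$ while the other disjunct is reduced to a trivially solvable equation. Any conjunction arising in the non-selected branch can be merged into a single equation by Lemma~\ref{sulten}. The delicate part is to guarantee that $s=t$ has \emph{no} spurious solutions, i.e.\ that every solution really does fall into one of the two intended modes; this I would establish by a first-point-of-difference argument on the positions of the markers $0$ and $1$, exactly in the style of the length/position analysis used in the proof of Lemma~\ref{sulten}.

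Finally, prefixing the two existential quantifiers $\exists x_1\exists x_2$ obtained in the first step and collecting them together with the auxiliary variables introduced by the disjunction gadget yields a block $\exists v_1\ldots\exists v_k$ in front of a single equation $s=t$, which is the required normal form. I expect the disjunction-elimination to be the only real obstacle: turning an \emph{or} into the solvability of one equation is genuinely harder than the \emph{and} handled by Lemma~\ref{sulten}, because a single equation naturally expresses conjunctions---aligned markers force both sides to match---so producing a disjunction instead requires the movable, marker-controlled alignment described above, whose soundness must be checked with care.
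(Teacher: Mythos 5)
Your first step is fine: in $\mathfrak{D}$ the formula $s_i \preceq t_i$ is equivalent to $\exists x_i[s_ix_i=t_i]$, so the lemma reduces to expressing a disjunction of two word equations as a single existentially quantified equation. But that reduction is precisely the mathematical content of the lemma, and your proposal never actually carries it out. You describe the intended shape of a gadget (a selector variable ``pinned by the constants $0$ and $1$'', two solution modes, spurious solutions to be excluded by a first-point-of-difference argument), but no terms $s,t$ are constructed and no soundness argument is given; what you yourself call ``the delicate part'' is the entire proof. Citing Senger and B\"uchi--Senger as a black box would indeed settle the statement, but then the lemma becomes an appeal to the literature rather than a proof --- and note that within this paper the fact that a disjunction of two equations collapses to one existential equation is Lemma~\ref{amanda}(1), which is \emph{derived from} Lemma~\ref{firecola}; so if you try to stay inside the paper's own toolkit, your plan is circular.

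The paper's proof avoids general disjunction elimination by a different, concrete route. First, instead of $\exists x_i[s_ix_i=t_i]$, it writes each pair with an explicit common prefix: $s_1=x_1x_2$, $t_1=x_1x_3$, $s_2=x_4x_5$, $t_2=x_4x_6$, so that the disjunction of the two prefix statements becomes the special disjunction $x_2=e \vee x_5=e$. Second, it shows that this special disjunction is equivalent to a \emph{conjunction} of equations: $(u=e \vee w=e)$ holds in $\mathfrak{D}$ iff $uw=wu$ together with $\psi(u,w)$, where $\psi$ asserts the existence of $y_1,\ldots,y_4$ with $y_1y_2=0$, $y_3y_4=1$, $uy_1wy_2=wy_2uy_1$ and $uy_3wy_4=wy_4uy_3$; the point is that two nonempty commuting strings must end in the same bit, which makes one of the two appended commutation equations unsatisfiable. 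Finally Lemma~\ref{sulten} merges the resulting conjunction into a single equation. To salvage your plan you would have to either exhibit your selector gadget explicitly and verify it admits no spurious solutions, or reduce your disjunction of equations to the $x_2=e\vee x_5=e$ case --- which is exactly what the paper does.
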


\begin{proof}
 Let $x_1,\ldots , x_6$ be variables that do not occur in any of the terms $s_1,s_2,t_1,t_2$.
It is not very hard to see that the formula 
$s_{1} \preceq t_{1} \vee  s_{2} \preceq t_{2}$
is equivalent in $\mathfrak{D}$ to the formula 
\begin{align*}
\exists x_{1} \ldots  x_{6} &
[ \ s_{1} = x_{1}  x_{2} \; \wedge \; t_{1} = x_{1}  x_{3} \; \wedge \; \\ & 
 \ s_{2} = x_{4}  x_{5} \; \wedge \;  t_{2} = x_{4}  x_{6} \; \wedge \; 
( x_{2} = e \; \vee \; x_{5} = e ) \ ] \; . \tag{*}
\end{align*}

Let $\psi(u,w)$ be the formula
\begin{align*}
  \exists y_1  y_2  y_3  y_4 [\
 y_1y_2 = 0 \; \wedge \; y_3y_4 = 1 
 \; \wedge \; 
 u y_1  w y_2 & = w   y_2  u y_1  \\ &  \; \wedge \;   
u y_3  w y_4  = w y_4  u y_3 \ ] \; .
\end{align*}

We claim that 
\begin{align*}
\mathfrak{D} \models   (  u = e \; \vee\; w = e  )  \; \leftrightarrow \; 
(  uw = wu   \; \wedge\; \psi(u,w)  )\; . \tag{**}
\end{align*}

We prove (**). Assume that $u = e  \vee w = e$. Let us say that $u = e$ (the case when $w = e$ is symmetric).
It is obvious that we have $uw = wu$. Moreover,  $\psi(u,w)$ holds with $y_1=y_3=e$, $y_2=0$ and $y_4=1$.
This prove the left-right implication of (**). 

To see that the converse implication holds, assume that
$\neg(u = e  \vee w = e)$, that is, both $u$ and $w$ are different from the empty string. Furthermore,
assume that $uw = wu$. We will argue that $\psi(u,w)$ does not hold: Since $uw = wu$ and both $u$ and $w$
contain at least one bit, it is  either the case that 0 is the last bit of both strings, or it is that case
that 1 is the last bit of both strings. If $0$ is the last bit of both, the two equations
$u y_3  w y_4  = w y_4  u y_3$ and  $y_3y_4 = 1$ cannot be satisfied simultaneously.
If $1$ is the last bit of both, the two equations  $u y_1  w y_2  = w   y_2  u y_1$
 and   $y_1y_2 = 0$ cannot be satisfied simultaneously. Hence we conclude that $\psi(u,w)$ does not hold.
This completes the proof of (**).

Our lemma follows from (*) and (**) by Lemma \ref{sulten}.
\qed
\end{proof}

\begin{lemma}\label{amanda}
Let $ \mathfrak{A} \in \lbrace \mathfrak{B}, \mathfrak{D} \rbrace$. 
Let $s_{1}, s_{2}, t_{1}, t_{2}$ be $\mathcal{L}_{BT}$-terms. 
There exist $\mathcal{L}_{BT}$-terms $s,t$ and variables $v_{0},\ldots ,v_{k}$ such that 
\begin{enumerate}
\item[(1)] $ \mathfrak{A} \models (s_{1} = t_{1} \vee s_{2} = t_{2}) \leftrightarrow 
\exists v_{0}\ldots  v_{k}[ s=t] $
\item[(2)] $ \mathfrak{A} \models  s_{1} \neq t_{1}  \leftrightarrow 
\exists v_{0} \ldots \exists v_{k} [ s=t ] $.
\end{enumerate}
\end{lemma}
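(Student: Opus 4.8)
The plan is to carry out all the work in $\mathfrak{D}$ and then transfer the conclusions to $\mathfrak{B}$ for free. The key observation is that neither side of the two claimed biconditionals mentions the relation symbol: the left-hand sides are a disjunction of equations, respectively a single negated equation, and the right-hand sides are existentially quantified equations, so both are built solely from $e,0,1,\circ$ and $=$. Since $\mathfrak{B}$ and $\mathfrak{D}$ share the same universe and interpret $e,0,1,\circ$ identically, any $\mathcal{L}_{BT}$-formula avoiding $\sqsubseteq$ has the same truth value in both structures under every assignment (the two structures are identical as $\{e,0,1,\circ\}$-reducts). Hence it suffices to construct the terms $s,t$ and the variables $v_0,\dots,v_k$ and to verify the biconditionals in $\mathfrak{D}$; the same terms and variables will then witness the claims in $\mathfrak{B}$.

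For part (1) I would first re-express equality through the prefix relation. In $\mathfrak{D}$ we have $s_i = t_i \leftrightarrow (s_i \preceq t_i \wedge t_i \preceq s_i)$, since two mutually prefixed strings have equal length and therefore coincide. Writing $A \equiv s_1\preceq t_1$, $B\equiv t_1\preceq s_1$, $C\equiv s_2\preceq t_2$, $D\equiv t_2\preceq s_2$, the disjunction $s_1=t_1 \vee s_2=t_2$ becomes $(A\wedge B)\vee(C\wedge D)$, which is propositionally equivalent to $(A\vee C)\wedge(A\vee D)\wedge(B\vee C)\wedge(B\vee D)$. Each of the four conjuncts is a disjunction of two prefix atoms, so by Lemma \ref{firecola} each is equivalent in $\mathfrak{D}$ to some $\exists v_1\dots v_k[s=t]$. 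After renaming the quantified variables apart and hoisting the four existential blocks in front of the conjunction, I obtain an existentially quantified conjunction of four equations, and three applications of Lemma \ref{sulten} collapse this conjunction into a single equation $s=t$. This proves (1) in $\mathfrak{D}$, hence in both structures.

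For part (2) I would use a branching description of inequality that lives entirely in the free monoid. Comparing $s_1$ and $t_1$ bit by bit, if $s_1\neq t_1$ then one of three situations occurs: they share a common prefix $p$ and then disagree on the next bit; $s_1$ is a proper prefix of $t_1$; or $t_1$ is a proper prefix of $s_1$. This yields
\begin{align*}
s_1\neq t_1 \;\leftrightarrow\; \exists p\,x\,y\,\big[\,& (s_1=p\,0\,x \wedge t_1=p\,1\,y)\vee(s_1=p\,1\,x \wedge t_1=p\,0\,y)\\
&\vee\; t_1=s_1\,0\,y \vee t_1=s_1\,1\,y \vee s_1=t_1\,0\,x \vee s_1=t_1\,1\,x\,\big],
\end{align*}
where every disjunct uses only $=$. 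Lemma \ref{sulten} turns the two conjunctions into single equations, leaving a disjunction of six equations under the prefix $\exists p\,x\,y$. Applying part (1) five times as a logical equivalence (with $p,x,y,s_1,t_1$ treated as parameters, and the variables introduced at each step renamed apart so that the existentials distribute outward across the remaining disjuncts) collapses this disjunction into one equation; pulling all quantifiers to the front gives the desired $\exists v_0\dots v_k[s=t]$.

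The main obstacle is concentrated in part (1): producing a single equation from a disjunction of equations. I would not attempt a direct word-equation construction; rather, the genuine difficulty is already absorbed by Lemma \ref{firecola}, whose commutation argument (forcing a string to be empty) handles the disjunctive behaviour at the level of the prefix relation. Once equality is rewritten through prefixes and distributed into a conjunction of prefix-disjunctions, Lemma \ref{sulten} performs only routine bookkeeping. The remaining care is purely syntactic: keeping the quantified variables produced by the several invocations of Lemmas \ref{firecola} and \ref{sulten} disjoint so that they can be safely hoisted to the front.
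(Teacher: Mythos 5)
Your proposal is correct and takes essentially the same route as the paper's proof: part (1) is obtained by rewriting equality as mutual prefixing in $\mathfrak{D}$, distributing into the four-fold conjunction of prefix-disjunctions, and then applying Lemma \ref{firecola} and Lemma \ref{sulten}; part (2) uses the same six-way case analysis (disagreement after a common prefix, or proper extension by $0$ or $1$) followed by Lemma \ref{sulten} and repeated use of part (1). The only cosmetic difference is the transfer to $\mathfrak{B}$: the paper notes that $\preceq^{\mathfrak{D}}$ is expressible in $\mathfrak{B}$ by $\exists v[xv=y]$, whereas you observe that both sides of the biconditionals are $\sqsubseteq$-free and the two structures have identical $\{e,0,1,\circ\}$-reducts---both are valid and amount to the same point.
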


\begin{proof}
Observe that  $s_{1} = t_{1} \vee s_{2} = t_{2}$ is equivalent in $\mathfrak{D}$ to
$$  (s_{1} \preceq t_{1} \wedge t_{1} \preceq s_{1} ) \vee 
(s_{2} \preceq t_{2} \wedge t_{2} \preceq s_{2} )$$
which again is  (logically) equivalent to
\begin{align*}
 (s_{1} \preceq t_{1} \; \vee \;  s_{2} \preceq t_{2} ) \; \wedge  \;
  (s_{1} \preceq t_{1} \;  \vee \;  & t_{2} \preceq s_{2} ) \; \wedge \; \\ & 
 \  ( t_{1} \preceq s_{1} \; \vee \; s_{2} \preceq t_{2} ) \; \wedge \;
( t_{1} \preceq s_{1} \; \vee \;  t_{2} \preceq s_{2} )\; .
\end{align*}
By Lemma \ref{sulten} and Lemma \ref{firecola}, it follows that (1) holds for
 the structure $\mathfrak{D}$. To see that  (1) also holds for the structure $\mathfrak{B}$,
observe that the relation $x \preceq^\mathfrak{D} y$ can be expressed  in $\mathfrak{B}$ by the formula  $\exists v[x v = y]$.

In order to see that (2) holds, observe that the formula  $s\neq t$ is equivalent---in both $\mathfrak{B}$ 
and $\mathfrak{D}$---to the formula
\begin{align*}\exists x y z [ \ s= t   0    x  \; \vee \; s= t   1    x &  \; \vee \;
 t= s   0    x \vee t= s   1    x  \; \vee \; \\ &
( s= x    1   y \; \wedge \;   t= x    0   z ) \; \vee \;   ( s= x    0   y \; \wedge \;  t= x    1   z )  \ ]\; .
\end{align*}
Thus,  (2) follows from (1) and Lemma \ref{sulten}.
\qed
\end{proof}

\begin{lemma}\label{martin}
Let $s_1, t_1$ be $\mathcal{L}_{BT}$-terms. There exist $\mathcal{L}_{BT}$-terms $s,t$ and variables $v_{1},\ldots, v_{k}$ such that 
\begin{enumerate}
\item[(1)] $ \mathfrak{D} \models s_{1} \preceq t_{1}  \leftrightarrow 
\exists v_{1}[ s_1 v_{1} = t_1 ] $
\item[(2)] $ \mathfrak{D} \models  s_{1} \not\preceq t_{1}  \leftrightarrow 
\exists v_{1} \ldots  v_{k} [s=t] $. 
\end{enumerate}
\end{lemma}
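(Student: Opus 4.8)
For part (1), the plan is to read off the equivalence directly from the structure $\mathfrak{D}$. By definition of $\preceq^{\mathfrak{D}}$, we have $s_1 \preceq t_1$ holding in $\mathfrak{D}$ exactly when $s_1$ is a prefix of $t_1$, i.e.\ exactly when there exists a bit string that, appended to $s_1$, yields $t_1$. This is precisely what $\exists v_1[s_1 v_1 = t_1]$ asserts. So (1) is immediate from the semantics of the prefix relation, and no real work is needed beyond citing the definition of $\mathfrak{D}$.

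For part (2), the idea is to express $s_1 \not\preceq t_1$ as a positive existential condition and then invoke the earlier normal-form lemmas to collapse it to a single equation. First I would give the defining disjunction: $s_1$ fails to be a prefix of $t_1$ precisely when either $t_1$ is a proper prefix of $s_1$, or the two strings first disagree at some position. Concretely, I expect $s_1 \not\preceq t_1$ to be equivalent in $\mathfrak{D}$ to a formula of the shape
\begin{align*}
\exists x\, y\, z\, [\ ( t_{1} \boldsymbol{0} \preceq s_1 \ \vee\ t_{1}\boldsymbol{1}\preceq s_1) \ \vee\ ( s_{1} = x\,\boldsymbol{0}\,y \ \wedge\ t_{1} = x\,\boldsymbol{1}\,z) \ \vee\ ( s_{1} = x\,\boldsymbol{1}\,y \ \wedge\ t_{1} = x\,\boldsymbol{0}\,z )\ ]\; ,
\end{align*}
the three disjuncts covering, respectively, the case where $t_1$ is a strict prefix of $s_1$ and the two cases where $s_1$ and $t_1$ agree on a common prefix $x$ but then branch in opposite directions. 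I would first verify the correctness of this disjunction as a semantic equivalence in $\mathfrak{D}$, which is the conceptual heart of the argument; the bookkeeping is to check that these cases are exhaustive and mutually capture exactly the negation of ``$s_1$ is a prefix of $t_1$.''

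Once this disjunction is established, the remaining step is purely mechanical: rewrite each prefix atom using part (1) to turn it into an existentially quantified equation, then combine the resulting disjunction of equation-systems into a single equation. For the disjunctions I would apply Lemma \ref{amanda}(1) repeatedly, and for the conjunctions of equations I would apply Lemma \ref{sulten}, pushing all the existential quantifiers to the front; this yields the required $\exists v_1 \ldots v_k[s=t]$ form. The main obstacle is getting the defining disjunction exactly right—ensuring the cases are genuinely exhaustive—while the subsequent reduction to a single equation is routine given the earlier lemmas.
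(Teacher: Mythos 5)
Your proof is correct and follows essentially the same route as the paper: express $s_1 \not\preceq t_1$ as a disjunction covering the proper-prefix case and the two first-disagreement cases, then collapse it to a single existentially quantified equation via Lemma \ref{sulten}, Lemma \ref{amanda} and part (1). The only difference is minor: the paper writes the proper-prefix disjunct as $t_1 \preceq s_1 \wedge t_1 \neq s_1$, which requires Lemma \ref{amanda}(2) to eliminate the inequation, whereas your formulation $t_1 0 \preceq s_1 \vee t_1 1 \preceq s_1$ avoids the inequation altogether and needs only Lemma \ref{amanda}(1).
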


\begin{proof}
It is obvious that  (1) holds. Furthermore,
the formula $ s_{1} \not\preceq t_{1}$ is equivalent in $\mathfrak{D}$ to the formula 
\begin{align*}
( t_1 \preceq s_1 \; \wedge \;  t_1 \neq s_1  ) \; \vee \; 
\exists x y z [  
 (t_1 = x    0    y &  \; \wedge \; s_1 =x    1    z  ) \; \vee \; \\ & 
(t_1 = x    1    y \; \wedge \; s_1 =x    0    z  )]\;  .
\end{align*}
Thus,  (2) follows by  Lemma \ref{sulten}, Lemma \ref{amanda} and (1).
\qed
\end{proof}

\paragraph{Comment:} It is not known to us whether 
the bounded universal  quantifier that appears in clause (2)  of 
the next lemma can be eliminated. 

\begin{lemma}\label{tyggegummi}
Let $s_1,t_1$ be $\mathcal{L}_{BT}$-terms. There exist $\mathcal{L}_{BT}$-terms $s,t$ and variables $v_{1},\ldots, v_k$ such that 
\begin{enumerate}
\item[(1)] $ \mathfrak{B} \models s_1 \sqsubseteq t_1 \leftrightarrow 
\exists v_{1} \exists v_{2}[  t_1 = v_{1} s_1  v_{2}  ] $
\item[(2)] $\mathfrak{B} \models  s_1 \not\sqsubseteq t_1 \leftrightarrow 
\forall v_{1} \sqsubseteq t_1 \exists v_{2}\ldots v_{k}
[s = t ] $. 
\end{enumerate}
\end{lemma}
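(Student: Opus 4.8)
The plan is to handle the two clauses separately. Clause (1) is essentially immediate: in $\mathfrak{B}$ the relation $s_1 \sqsubseteq t_1$ says precisely that $s_1$ occurs as a substring of $t_1$, i.e.\ that there are strings $v_1, v_2$ with $t_1 = v_1 s_1 v_2$, which is exactly the right-hand side. So I would dispatch (1) in a single sentence by appealing to the definition of $\sqsubseteq^{\mathfrak{B}}$.

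For clause (2) the plan is to give an explicit unbounded-quantifier description of ``$s_1$ is \emph{not} a substring of $t_1$'' and then convert it into the required normal form. The natural idea is: $s_1 \not\sqsubseteq t_1$ holds iff for every way of cutting $t_1$ into a prefix $v_1$ and the remaining suffix, the string $s_1$ fails to be a prefix of that suffix. Since a prefix of $t_1$ is in particular a substring of $t_1$, I can bound $v_1$ by $t_1$, which is what produces the leading bounded universal quantifier $\forall v_1 \sqsubseteq t_1$. Concretely, I would argue that
$$
\mathfrak{B} \models s_1 \not\sqsubseteq t_1 \;\leftrightarrow\; (\forall v_1 \sqsubseteq t_1)\big[\, v_1 \text{ is not a prefix of } t_1 \;\vee\; s_1 \not\preceq (\text{the suffix of } t_1 \text{ after } v_1)\,\big],
$$
and then express ``$v_1$ is a prefix of $t_1$'' and the associated suffix purely equationally. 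The point is that inside the bounded universal quantifier, everything that remains is a Boolean combination of equalities, inequalities, and prefix/non-prefix assertions between $\mathcal{L}_{BT}$-terms.

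The key engineering step is then to collapse that inner Boolean combination into a single block of unbounded existential quantifiers over one equation $s=t$. For this I would invoke the machinery already developed: Lemma~\ref{martin} converts $\preceq$ and $\not\preceq$ assertions in $\mathfrak{D}$ into existential-equational form, and since $x \preceq^{\mathfrak{D}} y$ is expressible in $\mathfrak{B}$ as $\exists v[xv=y]$ (as noted in the proof of Lemma~\ref{amanda}), these conversions transfer to $\mathfrak{B}$. Lemma~\ref{amanda}(2) handles the inequalities, and Lemma~\ref{sulten} is the indispensable packing tool that merges a conjunction of two equations into one equation, allowing me to fuse all the equational conditions and the existential quantifiers into the required $\exists v_2 \ldots v_k[s=t]$ shape sitting under the bounded $\forall v_1$.

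The main obstacle is formulating the inner matrix correctly so that it is genuinely equivalent to $s_1 \not\sqsubseteq t_1$ and so that it consists only of the atoms the normal-form lemmas can absorb. The subtlety is that a substring occurrence of $s_1$ in $t_1$ starts at some position, and the universal quantifier must range over \emph{all} candidate starting positions (encoded as prefixes $v_1$); I must ensure the ``$s_1$ does not begin at the end of $v_1$'' condition is phrased as a prefix relation on the correct residual suffix and that the degenerate cases (e.g.\ $v_1$ not actually a prefix, or the suffix being too short to contain $s_1$) are correctly covered by the disjunction. Once the matrix is pinned down, the remaining work is the routine application of Lemmas~\ref{sulten}, \ref{amanda}, and \ref{martin}, exactly as in the preceding lemmas, and as the paragraph preceding the statement already flags, the bounded universal quantifier is expected to be irreducible here.
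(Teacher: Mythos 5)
Your proposal is correct and follows essentially the same route as the paper: clause (1) by definition of $\sqsubseteq^{\mathfrak{B}}$, and clause (2) by bounding the starting position $v_1$ by $\sqsubseteq t_1$ and expressing the matrix as ``$v_1 s_1$ is not a prefix of $t_1$'' (your two-case disjunction is equivalent to this), then collapsing it into a single existentially quantified equation via Lemmas \ref{sulten}, \ref{amanda} and \ref{martin}, with the same transfer of $\preceq$ to $\mathfrak{B}$ via $\exists v[xv=y]$ that the paper uses.
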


\begin{proof}
Cause (1)  is trivial. Furthermore, observe that
$s_1 \not\sqsubseteq t_1$ is equivalent in $\mathfrak{B}$ to 
$(\forall v  \sqsubseteq t_1)\alpha$ where $\alpha$ is
\begin{multline*}
\exists x [ \ t_1x = v s_1 \, \wedge \,  x\neq  e \ ]\;\vee \; \exists x y z [ \ 
(t_1 = x0y \, \wedge \, vs_1 = x1z) \; \vee \; \\ (t_1 = x1y \, \wedge \, vs_1 = x0z)   \  ]  \; .
\end{multline*}
If we let $vs_1 \preceq t_1$ abbreviate $(\exists x)(v s_1 x = t)$, then $\alpha$ can be written as
$vs_1 \not\preceq t_1$.
Thus, (2) follows by Lemma \ref{amanda}(2).
\qed
\end{proof}

\begin{theorem}[Normal Form Theorem I] \label{bispegaard}
Any $\Sigma$-formula $\phi$ is equivalent in $\mathfrak{D}$ to a $\mathcal{L}_{BT}$-formula $\phi'$ of the form 
$$ \phi' \; \equiv \; (\quantq_{1}^{t_{1}} v_{1} )\ldots (\quantq_{m}^{t_{m}} v_{m} )  (s=t)$$
where $t_{1},..,t_{m}, s,t$ are $\mathcal{L}_{BT}$-terms and 
$\quantq_{j}^{t_{j}} v_{j} \in \lbrace \exists v_{j}, \exists v_{j} \preceq t_{j} ,  \forall v_{j} \preceq t_{j} \rbrace$ 
for  $j=1,\ldots , m$.  
Moreover, if $\phi$ does not contain bounded universal quantifiers, then  $\phi'$  does not contain bounded quantifiers.
\end{theorem}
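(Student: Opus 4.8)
The plan is to argue by induction on the structure of the $\Sigma$-formula $\phi$, but to strengthen the statement so that the induction carries exactly the bookkeeping the ``moreover'' clause needs. Concretely, I would prove the stronger invariant that every $\Sigma$-formula $\phi$ is equivalent in $\mathfrak{D}$ to a normal-form formula $\phi'$ whose prefix contains \emph{no bounded existential quantifiers} (only unbounded existentials $\exists v_j$ and bounded universals $\forall v_j \preceq t_j$), and such that every bounded universal occurring in $\phi'$ originates from a bounded universal occurring in $\phi$. Granting this, the theorem is immediate: the prefix quantifiers all lie in the allowed set, and if $\phi$ has no bounded universal then $\phi'$ has neither bounded universals nor (by construction) bounded existentials, hence no bounded quantifiers at all.

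For the base cases $\phi$ is atomic or negated atomic, i.e.\ of one of the forms $s=t$, $s \neq t$, $s \preceq t$, $s \not\preceq t$. The first is already in normal form with empty prefix; the other three become pure unbounded-existential formulas with a single equation as matrix by Lemma~\ref{amanda}(2), Lemma~\ref{martin}(1) and Lemma~\ref{martin}(2) respectively. In each case $\phi'$ has no bounded quantifiers, so the invariant holds.

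For the inductive step I would treat the five constructors in turn, always first $\alpha$-renaming the bound variables in the normal forms given by the induction hypothesis so that the two prefixes use pairwise distinct variables, none occurring in the bound terms or matrix of the other formula. For a conjunction $\alpha \wedge \beta$, with $\alpha' = (\vec{Q})(s_\alpha = t_\alpha)$ and $\beta' = (\vec{R})(s_\beta = t_\beta)$, I would prenex both prefixes to the front and then collapse $s_\alpha = t_\alpha \wedge s_\beta = t_\beta$ to a single equation by Lemma~\ref{sulten}, giving $(\vec{Q})(\vec{R})(s=t)$. For a disjunction $\alpha \vee \beta$ I would prenex likewise and replace the disjunction of equations by a single existentially quantified equation via Lemma~\ref{amanda}(1). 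An unbounded existential $(\exists x)\alpha$ and a bounded universal $(\forall x \preceq r)\alpha$ are handled by simply prepending the quantifier to the normal form of $\alpha$. Finally, a bounded existential $(\exists x \preceq r)\alpha$ is \emph{eliminated}: write it as $(\exists x)[x \preceq r \wedge \alpha']$, replace $x \preceq r$ by $\exists w[xw=r]$ through Lemma~\ref{martin}(1), prenex the prefix of $\alpha'$ outward, and merge the equation $xw = r$ with the matrix of $\alpha'$ by Lemma~\ref{sulten}, producing unbounded existentials followed by the unchanged quantifiers of $\alpha'$. In every step the only newly introduced quantifiers are unbounded existentials and no bounded universal is ever created, so both parts of the invariant survive.

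The step needing the most care---and where I expect the real work to be---is checking that the prenex operations are sound \emph{and} keep the bounded structure intact. The delicate point is commuting a connective past a bounded universal. Pulling $(\forall x \preceq t)A$ out of a conjunction $(\forall x \preceq t)A \wedge \beta$ (with $x \notin FV(\beta)$) rests on the equivalence $(\forall x \preceq t)\beta \equiv \beta$, which holds in $\mathfrak{D}$ precisely because the bound is always satisfiable---$e \preceq t$ for every $t$---so the vacuous bounded universal collapses; the disjunctive case $(\forall x \preceq t)A \vee \beta \equiv (\forall x \preceq t)(A \vee \beta)$ is just the distribution of $\forall$ over $\vee$ rewritten through the implication encoding the bound. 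It is here that the strengthened invariant pays off: since the induction never produces bounded existentials, I never have to commute a bounded \emph{existential} past a disjunction, an operation that would break the bounded form, since $(\exists x \preceq t)A \vee \beta$ only yields the unbounded $(\exists x)([x \preceq t \wedge A] \vee \beta)$ with the bound absorbed as a stray $\preceq$-atom in the matrix. Granting these prenex equivalences, the rest is the routine application of Lemmas~\ref{sulten}, \ref{amanda} and \ref{martin} indicated above.
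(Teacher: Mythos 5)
Your proof is correct, and its skeleton coincides with the paper's: structural induction on $\phi$, with Lemma~\ref{amanda}(2) and Lemma~\ref{martin} disposing of the negated atoms, Lemma~\ref{sulten} collapsing a conjunction of equations, and Lemma~\ref{amanda}(1) collapsing a disjunction. The genuine difference is your treatment of bounded existential quantifiers. The paper's proof handles all three quantifier cases by prepending the quantifier to the normal form of $\alpha$ (``the theorem follows trivially from the induction hypothesis''), which suffices for the main clause because $\exists v_j \preceq t_j$ is allowed in the normal-form prefix; but prepending a bounded existential does \emph{not} yield the ``moreover'' clause when $\phi$ contains bounded existentials and no bounded universals, since the resulting $\phi'$ would still contain a bounded quantifier. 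Your strengthened invariant---the prefix never contains bounded existentials, and bounded universals only originate from those of $\phi$---repairs exactly this point: you eliminate $(\exists x \preceq r)$ by rewriting it as $(\exists x)(\exists w)$ with the equation $xw=r$ merged into the matrix via Lemma~\ref{martin}(1) and Lemma~\ref{sulten}. This is what actually delivers the last sentence of the theorem, and it is also what the paper tacitly appeals to in the corollary on $\msigf{n}{m}{0}{D}$ (the parenthetical ``regard the bounded existential quantifiers as unbounded''). In effect you prove a slightly stronger normal form than the one stated.

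Your care over the prenex equivalences is likewise warranted and goes beyond the paper, which passes over them in silence: pulling $(\forall x \preceq t)$ out of a conjunction uses that the bound is never empty ($e \preceq t$ holds for every $t$ in $\mathfrak{D}$), and pulling $(\exists x)$ out of a disjunction uses non-emptiness of the universe; both hold here, so there is no gap---just perform the $\alpha$-renaming before moving any prefix, as you indicate.
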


\begin{proof}
We proceed by induction on the structure of $\phi$ (throughout the proof we  reason in the structure $\mathfrak{D}$).
Suppose $\phi$ is an atomic formula or the negation of an atomic formula. 
If $\phi$ is of the form $s=t$, let $\phi^{\prime}$ be $s=t$. Use Lemma \ref{amanda}(2)
if $\phi$ is of the form $\neg s=t$.  Use Lemma \ref{martin} 
if $\phi$ is  of one of the forms $s\preceq t$ and $\neg s\preceq t$.

Suppose $\phi$ is of the form $\alpha \wedge \beta$. By our induction hypothesis, we have formulas
$$  \alpha^{\prime} \; \equiv \;  (\quantq_{1}^{t_{1}} x_{1})\ldots (\quantq_{k}^{t_{k}} x_{k})  (s_{1}=t_{1}) \;\; \mbox{ and } \;\;
 \beta^{\prime} \; \equiv \;  (\quantq_{1}^{s_{1}} y_{1})\ldots (\quantq_{m}^{s_{m}} y_{m}) (s_{2}=t_{2}) $$
which are equivalent  to respectively $\alpha$ and $\beta$. 
Thus, $\phi$ is equivalent to a formula of the form
$(\quantq_{1}^{t_{1}} x_{1})\ldots (\quantq_{k}^{t_{k}} x_{k})(\quantq_{1}^{s_{1}} y_{1})\ldots (\quantq_{m}^{s_{m}} y_{m})  
( s_{1}=t_{1} \wedge s_{2}=t_{2}) \; .$
By Lemma \ref{sulten}, we have a formula $\phi'$ of the desired form which is equivalent to $\phi$.
The case when $\phi$ is of the form $\alpha \vee \beta$ is similar. Use Lemma \ref{amanda}(1) in place of Lemma \ref{sulten}.

The theorem follows trivially from the induction hypothesis when $\phi$ is of one of the forms $(\exists v) \alpha$,
$(\forall v \preceq t) \alpha$ and $(\exists v \preceq t) \alpha$.
\qed
\end{proof}

\begin{theorem}[Normal Form Theorem II] \label{xxbispegaard}
Any $\Sigma$-formula $\phi$ is equivalent in $\mathfrak{B}$ to a $\mathcal{L}_{BT}$-formula $\phi'$ of one of  the forms 
$$ \phi' \, \equiv \, (\quantq_{1}^{t_{1}} v_{1} )\ldots (\quantq_{m}^{t_{m}} v_{m} ) \, (s=t) \;\; \mbox{ or }\;\; 
\phi' \, \equiv \, (\exists v)(\quantq_{1}^{t_{1}} v_{1} )\ldots (\quantq_{m}^{t_{m}} v_{m} ) \, (s=t)$$
where $t_{1},..,t_{m}, s,t$ are $\mathcal{L}_{BT}$-terms and 
$\quantq_{j}^{t_{j}} v_{j} \in \lbrace  \exists v_{j} \sqsubseteq t_{j} ,  \forall v_{j} \sqsubseteq t_{j} \rbrace$ for $j= 1, \ldots , m$.  
\end{theorem}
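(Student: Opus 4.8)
The plan is to mimic the proof of Theorem~\ref{bispegaard}, proceeding by induction on the structure of the $\Sigma$-formula $\phi$ and reasoning throughout in $\mathfrak{B}$. The invariant I maintain is that the $\phi'$ produced for $\phi$ has one of the two displayed shapes: a block of bounded $\sqsubseteq$-quantifiers in front of a single equation, possibly preceded by one unbounded existential. Before starting the induction I would record the observation that makes $\mathfrak{B}$ behave well: in each of Lemmas~\ref{amanda} and \ref{tyggegummi} every existential witness may be taken to be a substring of one of the terms $s_1,s_2,t_1,t_2$ (or of $0$ or $1$) occurring in the matrix, and these terms do not contain the witness variable. Hence each such existential may be replaced by a bounded one $(\exists v\sqsubseteq b)$, where $b$ is a fixed concatenation of those terms with $0$ and $1$; this yields ``bounded'' versions of Lemmas~\ref{amanda} and \ref{tyggegummi} whose witnesses are all $\sqsubseteq$-bounded. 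I would also note two elementary facts about $\mathfrak{B}$ that license the prenex moves below: every bounded domain $\{x : x\sqsubseteq t\}$ is nonempty (since $e\sqsubseteq t$) and finite.

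For the base cases, $s=t$ is already of the first form, while $\neg(s=t)$, $s\sqsubseteq t$ and $\neg(s\sqsubseteq t)$ are handled by the bounded versions of Lemmas~\ref{amanda}(2), \ref{tyggegummi}(1) and \ref{tyggegummi}(2); in each only bounded quantifiers appear, so every base case lands in the first (purely bounded) form. For $\phi=\alpha\wedge\beta$ I would rename the bound variables of $\alpha'$ and $\beta'$ apart and prenex: leading existentials, bounded existentials and bounded universals all pull out of a conjunction (nonemptiness of the domains $\{x:x\sqsubseteq t\}$ being what lets a bounded universal pass across the other conjunct), leaving $E_1\wedge E_2$, which Lemma~\ref{sulten} merges into a single equation. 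For $\phi=\alpha\vee\beta$ I would prenex likewise -- nonemptiness now being what lets a bounded existential pass out of a disjunction -- and then apply the bounded version of Lemma~\ref{amanda}(1) to convert the resulting $E_1\vee E_2$ into a bounded-existential block over one equation. In both cases the number of leading unbounded existentials of $\phi'$ is the sum of those of $\alpha'$ and $\beta'$, hence at most two.

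Two further tools restore the invariant. First, a block of consecutive leading unbounded existentials collapses to one: for fresh $x$,
\[
\exists u_1\cdots\exists u_p\,\chi \;\leftrightarrow\; \exists x\,(\exists u_1\sqsubseteq x)\cdots(\exists u_p\sqsubseteq x)\,[\,x=u_1\cdots u_p \,\wedge\, \chi\,],
\]
which holds in $\mathfrak{B}$ because each $u_i$ is a substring of the concatenation $u_1\cdots u_p$; absorbing the new equation by Lemma~\ref{sulten} preserves the second form. This disposes of the conjunction, disjunction and unbounded-existential cases (in the last, $\phi=(\exists x)\alpha$ merely prepends one existential, then merges with any existential already in $\alpha'$). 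Second, and this is where I expect the real difficulty, is the bounded universal case $\phi=(\forall x\sqsubseteq t)\alpha$ with $\alpha'$ of the second form $(\exists u)\,\gamma$, where $\gamma$ is a bounded prefix over a single equation: here the unbounded existential sits \emph{inside} a universal, and $\forall\exists$ cannot be turned into $\exists\forall$ by prenexing. The resolution, specific to $\mathfrak{B}$, is a collection principle
\[
(\forall x\sqsubseteq t)(\exists u)\gamma \;\leftrightarrow\; (\exists U)(\forall x\sqsubseteq t)(\exists u\sqsubseteq U)\gamma ,
\]
valid because $\{x:x\sqsubseteq t\}$ is finite, so the finitely many witnesses $u$ can be gathered into a single string $U$ containing each as a substring. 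This moves the existential to the front as the single permitted leading existential and leaves $(\forall x\sqsubseteq t)(\exists u\sqsubseteq U)$ followed by the bounded prefix of $\gamma$, so $\phi'$ is again of the second form. The bounded-existential case $(\exists x\sqsubseteq t)\alpha$ is easy by comparison, since a leading $(\exists u)$ in $\alpha'$ commutes with the bounded existential and moves to the front directly.

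With all cases closed the induction is complete. The crux, and the only place where something beyond routine prenexing and the earlier lemmas is needed, is the bounded universal step; the collection principle used there is precisely what forces the normal form to tolerate a leading unbounded existential, matching the remark preceding Lemma~\ref{tyggegummi} that the bounded universal quantifier cannot in general be eliminated.
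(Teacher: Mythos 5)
Your proposal is correct and follows essentially the same route as the paper: a structural induction mirroring Theorem \ref{bispegaard} whose crux is exactly the paper's equivalence $(\forall x \sqsubseteq t)(\exists y)\alpha \leftrightarrow (\exists z)(\forall x \sqsubseteq t)(\exists y \sqsubseteq z)\alpha$ (your collection principle), together with commuting $(\exists x \sqsubseteq t)$ past an unbounded $(\exists y)$ and collapsing several unbounded existentials into a single one via a common bound. Your additional step of producing bounded versions of Lemmas \ref{amanda} and \ref{tyggegummi} for the base cases is a refinement of detail rather than a different method.
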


\begin{proof}
Proceed by induction on the structure of $\phi$. This proof is similar to the proof of Theorem \ref{bispegaard}.
 A formula of the form $(\forall x \sqsubseteq t)(\exists y)\alpha$
 is equivalent (in $\mathfrak{B}$) to a formula of the form $(\exists z)(\forall x \sqsubseteq t)(\exists y \sqsubseteq z)\alpha$,
a formula of
the form $(\exists x \sqsubseteq t)(\exists y)\alpha$ is equivalent
  to a formula of  the form $(\exists y)\alpha (\exists x \sqsubseteq t)$,
and a formula of
the form $(\exists x)(\exists y)\alpha$ is equivalent
  to a formula of  the form $(\exists z)(\exists x \sqsubseteq z)(\exists y \sqsubseteq z)\alpha$.
Thus, the resulting normal form will contain maximum one unbounded existential quantifier.
\qed
\end{proof}

\begin{corollary}
The fragment  $\msigf{n}{m}{0}{D}$ is decidable (for any $n,m\in\nat$).
\end{corollary}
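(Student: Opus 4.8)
The plan is to reduce the problem to the satisfiability of word equations and then appeal to Makanin's theorem. First I would observe that deciding membership in $\msigf{n}{m}{0}{D}$ amounts to deciding, for a given $\sigf{n}{m}{0}$-sentence $\phi$, whether $\mathfrak{D}\models\phi$; the syntactic test of whether an input sentence is a $\sigf{n}{m}{0}$-sentence is trivially decidable, so all the work lies in deciding truth in $\mathfrak{D}$.

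The key step is to invoke Normal Form Theorem~I (Theorem~\ref{bispegaard}). Since a $\sigf{n}{m}{0}$-formula contains no bounded universal quantifiers, the ``Moreover'' clause of that theorem guarantees that $\phi$ is equivalent in $\mathfrak{D}$ to a formula $\phi'$ containing no bounded quantifiers at all, i.e., a formula of the form
$$\phi' \;\equiv\; (\exists v_{1})\ldots(\exists v_{p})\,(s=t)$$
where $s,t$ are $\mathcal{L}_{BT}$-terms. I would also note that this transformation is effective: the lemmas underlying Theorem~\ref{bispegaard} construct the equivalent formula explicitly, so $\phi'$ can be computed from $\phi$. Because $\phi$ is a sentence, $\phi'$ is a sentence as well; hence every variable occurring in $s$ and $t$ is among $v_{1},\ldots,v_{p}$, and $s,t$ are built from these variables together with the constants $e,0,1$ by the concatenation operator.

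It now suffices to decide whether $\mathfrak{D}\models\phi'$. But $\mathfrak{D}\models(\exists v_{1})\ldots(\exists v_{p})(s=t)$ holds if and only if the word equation $s=t$---read over the alphabet $\{\boldsymbol{0},\boldsymbol{1}\}$ with $e$ denoting the empty word and $v_{1},\ldots,v_{p}$ as unknowns---admits a solution in $\{\boldsymbol{0},\boldsymbol{1}\}^{*}$. Thus the problem reduces exactly to the satisfiability of a word equation with constants over a two-letter alphabet. By Makanin's theorem this problem is decidable, and therefore so is the truth of $\phi'$ in $\mathfrak{D}$, and hence membership in $\msigf{n}{m}{0}{D}$.

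The only substantial ingredient is Makanin's theorem; everything else is bookkeeping. The point I would be most careful about is the effectiveness and correctness of the appeal to Theorem~\ref{bispegaard}: one must check that the normal-form construction is genuinely algorithmic and that it produces a sentence, so that no free variable escapes the quantifier prefix and the resulting word equation is a closed instance with a well-defined satisfiability question. A minor additional check is that the constant $e$ is faithfully handled as the empty word, which poses no difficulty for word-equation solving.
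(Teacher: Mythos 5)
Your proposal is correct and follows essentially the same route as the paper: apply Normal Form Theorem I (using its ``Moreover'' clause to eliminate all bounded quantifiers from a $\sigf{n}{m}{0}$-sentence), note that the transformation is constructive, and then decide the resulting existentially quantified word equation over $\{\boldsymbol{0},\boldsymbol{1}\}^{*}$ by Makanin's theorem. Your added care about effectiveness, closedness of the resulting equation, and the handling of $e$ as the empty word only makes explicit what the paper leaves implicit.
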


\begin{proof}
By  Theorem \ref{bispegaard}, any $\sigf{n}{m}{0}$-sentence is equivalent in $\mathfrak{D}$ to a sentence of the normal  form
$\exists v_{1} \ldots  v_{k} [s=t]$ (regard the bounded existential quantifiers as unbounded). 
The transformation of a $\sigf{n}{m}{0}$-formula into an equivalent formula (in $\mathfrak{D}$) of normal form is constructive.
 Makanin \cite{makanin}   has proved that it is decidable whether an equation on the form 
$$a_{n}x_{n}\ldots a_{1}x_{1}a_{0} = b_{m}y_{m}\ldots b_{1}y_{1}b_{0}$$
where $a_{1},...,a_{n}, b_{1},...,b_{m} \in \{ \mathbf{0}, \mathbf{1}\}^*$, has a solution in $\{ \mathbf{0}, \mathbf{1}\}^*$. 
It follows that the fragment
$\msigf{n}{m}{0}{D}$ is decidable.
\qed
\end{proof}

We have not been able to prove that any $\sigf{n}{m}{0}$-sentence is equivalent in $\mathfrak{B}$ 
to a sentence of the form $\exists v_{1} \ldots \exists v_{k} [s=t]$.
See the comment immediately before Lemma \ref{tyggegummi}.
Thus, we cannot use  Makanin's \cite{makanin} result to prove that  the fragment $\msigf{n}{m}{0}{B}$ is decidable.

\paragraph{Open Problem:} Is the fragment  $\msigf{n}{m}{0}{B}$  decidable (for any $n,m\in\nat$)?

\section{ Undecidable Fragments}

\begin{definition}
{\em Post's Correspondence Problem}, henceforth $\xpcp$, is given by
\begin{itemize}
\item Instance: a list of pairs $\langle b_{1}, b_{1}^{\prime} \rangle,\ldots , \langle b_{n}, b_{n}^{\prime} \rangle$ where 
$b_i,b_i^{\prime}\in \lbrace \boldsymbol{0}, \boldsymbol{1} \rbrace^{*}$
\item Solution:  a finite nonempty sequence $i_{1},...,i_{m}$ of indexes such that 
$$b_{i_{1}} b_{i_{2}}\ldots  b_{i_{m}} = b_{i_{1}}^{ \prime } b_{i_{2}}^{ \prime }\ldots  b_{i_{m}}^{ \prime } \; . $$
\end{itemize}

We define the map  $N: \lbrace \boldsymbol{0}, \boldsymbol{1} \rbrace^{*} \to \lbrace \boldsymbol{0}, \boldsymbol{1} \rbrace^{*}$  by 
$N(\varepsilon) = \varepsilon$,  
$N( \boldsymbol{0}) = \boldsymbol{0} \boldsymbol{1} \boldsymbol{0}$,
$N( \boldsymbol{1}) = \boldsymbol{0} \boldsymbol{1}^{2} \boldsymbol{0}$,   
$N( b \boldsymbol{0}) = N(b) N(\boldsymbol{0})$ and
$N( b \boldsymbol{1}) = N(b) N(\boldsymbol{1} )$.
\end{definition}

It is proved in Post \cite{post} that $\xpcp$  is undecidable. The proof of the next lemma is left to the reader.

\begin{lemma}\label{temedmelk}
The instance $\langle b_{1}, b_{1}^{\prime} \rangle,\ldots , \langle b_{n}, b_{n}^{\prime} \rangle$ of $\xpcp$  has a solution
iff the instance $\langle N(b_{1}), N(b_{1}^{\prime}) \rangle,\ldots , \langle N(b_{n}), N(b_{n}^{\prime}) \rangle$ has a solution.
\end{lemma}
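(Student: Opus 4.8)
The plan is to reduce the biconditional to two structural facts about the substitution $N$, namely that it is a monoid homomorphism and that it is \emph{injective}. Indeed, I will prove the stronger statement that a fixed nonempty index sequence $i_1,\ldots,i_m$ solves the instance $\langle b_1,b_1'\rangle,\ldots,\langle b_n,b_n'\rangle$ if and only if it solves the instance $\langle N(b_1),N(b_1')\rangle,\ldots,\langle N(b_n),N(b_n')\rangle$. The lemma then follows at once, since the same sequence witnesses a solution on both sides. Writing $u = b_{i_1}\cdots b_{i_m}$ and $v = b_{i_1}'\cdots b_{i_m}'$, the original instance is solved by the sequence iff $u=v$, while the transformed instance is solved iff $N(b_{i_1})\cdots N(b_{i_m}) = N(b_{i_1}')\cdots N(b_{i_m}')$; so everything comes down to the equivalence $u=v \Leftrightarrow N(u)=N(v)$.

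First I would check that $N$ is a homomorphism, i.e.\ $N(uv)=N(u)N(v)$ for all $u,v\in\{\boldsymbol{0},\boldsymbol{1}\}^*$, by induction on $|v|$: the base case $v=\varepsilon$ uses $N(\varepsilon)=\varepsilon$, and the inductive step $v=w\boldsymbol{c}$ (with $\boldsymbol{c}$ a bit) uses the defining clauses $N(b\boldsymbol{0})=N(b)N(\boldsymbol{0})$ and $N(b\boldsymbol{1})=N(b)N(\boldsymbol{1})$ twice together with the induction hypothesis. Given this, the transformed equation $N(b_{i_1})\cdots N(b_{i_m}) = N(b_{i_1}')\cdots N(b_{i_m}')$ collapses to $N(u)=N(v)$. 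The implication $u=v \Rightarrow N(u)=N(v)$ is then trivial (since $N$ is a function), which already delivers the direction from the original instance to the transformed one. It remains to establish the converse implication, which is where injectivity of $N$ is needed.

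The injectivity of $N$ is the crux of the argument, and I expect it to be the only nonroutine step. The point is that the two codewords $N(\boldsymbol{0})=\boldsymbol{0}\boldsymbol{1}\boldsymbol{0}$ and $N(\boldsymbol{1})=\boldsymbol{0}\boldsymbol{1}\boldsymbol{1}\boldsymbol{0}$ form a uniquely decodable code, which I would make precise through a run-length analysis. For $b=\boldsymbol{c}_1\cdots\boldsymbol{c}_\ell$ one sees, by the homomorphism property, that
$$N(b)=\boldsymbol{0}\boldsymbol{1}^{k_1}\boldsymbol{0}\,\boldsymbol{0}\boldsymbol{1}^{k_2}\boldsymbol{0}\cdots\boldsymbol{0}\boldsymbol{1}^{k_\ell}\boldsymbol{0}, \qquad k_i=1 \text{ if } \boldsymbol{c}_i=\boldsymbol{0},\ \ k_i=2 \text{ if } \boldsymbol{c}_i=\boldsymbol{1}.$$
Hence, reading left to right, the maximal blocks of consecutive $\boldsymbol{1}$'s of $N(b)$ are exactly $\boldsymbol{1}^{k_1},\ldots,\boldsymbol{1}^{k_\ell}$, each of length $1$ or $2$; no two such runs are ever merged, since consecutive codewords are separated by a pair of $\boldsymbol{0}$'s. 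Because the sequence of maximal $\boldsymbol{1}$-runs is an invariant of a string, and each run length recovers the corresponding bit via the bijection between $\{1,2\}$ and $\{\boldsymbol{0},\boldsymbol{1}\}$, the string $b$ is uniquely determined by $N(b)$. Therefore $N(u)=N(v)$ forces $u=v$, which completes the equivalence $u=v\Leftrightarrow N(u)=N(v)$ and thereby the lemma.
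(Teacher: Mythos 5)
Your proof is correct, and there is nothing in the paper to compare it against: the authors explicitly leave this lemma's proof to the reader. Your argument --- that $N$ is a monoid homomorphism and is injective (by unique decodability of the code $\{\boldsymbol{0}\boldsymbol{1}\boldsymbol{0},\, \boldsymbol{0}\boldsymbol{1}\boldsymbol{1}\boldsymbol{0}\}$ via the maximal $\boldsymbol{1}$-run analysis), so that a fixed index sequence solves one instance iff it solves the other --- is precisely the natural argument the authors presumably intended, and it cleanly fills the gap.
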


We will now explain the ideas behind our proofs of the next few theorems.
Given the lemma above, it is not very hard to see 
that an instance  $\langle g_{1}, g_{1}^{\prime} \rangle,\ldots , \langle g_{n}, g_{n}^{\prime} \rangle$  of $\xpcp$ 
has a solution iff there exists a bit string of the form
\begin{align*}
\boldsymbol{0} \boldsymbol{1}^{5} \boldsymbol{0} N(a_{1}) \boldsymbol{0} \boldsymbol{1}^{4} \boldsymbol {0} N(b_{1}) \boldsymbol{0} \boldsymbol{1}^{5} \boldsymbol {0} \ldots 
N(a_{m}) \boldsymbol{0} \boldsymbol{1}^{4} \boldsymbol {0} N(b_{m}) \boldsymbol{0} \boldsymbol{1}^{5} \boldsymbol {0} \tag{*}
\end{align*}
where
\begin{enumerate}
\item[(A)] $N(a_{m}) = N(b_{m})$ 
\item[(B)] $N(a_{1}) = g_{j}$ and $N(b_{1}) = g_{j}^{\prime}$ for some $1\leq j \leq n$ 
\item[(C)] $N(a_{k+1}) = N(a_{k}) N(g_{j})$ and $N(b_{k+1}) = N(b_{k}) N(g_{j}^{\prime} )$ for some $1\leq j \leq n$. 
\end{enumerate}
We also see that an instance  $\langle g_{1}, g_{1}^{\prime} \rangle,\ldots , \langle g_{n}, g_{n}^{\prime} \rangle$  of $\xpcp$ 
has a solution iff there exists a bit string  $s$ of the form (*)  that satisfies
\begin{enumerate}
\item[(a)] there is $j\in\{1,\ldots,n\}$ such that 
$\boldsymbol{0} \boldsymbol{1}^{5} \boldsymbol{0} N(g_{j}) 
\boldsymbol{0} \boldsymbol{1}^{4} \boldsymbol{0} N(g_{j}^{\prime} ) \boldsymbol{0} \boldsymbol{1}^{5} \boldsymbol{0}$
is an initial segment of $s$  
\item[(b)]  if  
$$\boldsymbol{0} \boldsymbol{1}^{5} \boldsymbol{0} N(a) 
\boldsymbol{0} \boldsymbol{1}^{4} \boldsymbol{0} N(b) \boldsymbol{0} \boldsymbol{1}^{5} \boldsymbol{0}$$
is a substring of $s$, then  either  $N(a)=N(b)$, or there is   $j\in\{1,\ldots,n\}$ such that 
\[
\boldsymbol{0} \boldsymbol{1}^{5} \boldsymbol{0} N(a)N(g_{j}) 
\boldsymbol{0} \boldsymbol{1}^{4} \boldsymbol{0} N(b )N(g_{j}^{\prime}) \boldsymbol{0} \boldsymbol{1}^{5} \boldsymbol{0} 
\]
is a substring of $s$.  
\end{enumerate}

In the proof of Theorem \ref{teutendmelk} we give a formula which is true in $\mathfrak{D}$ iff there exists a string of the form (*)  that satisfies  (A), (B) and (C).
In the proof of Theorem \ref{ateutendmelk}  we  give  formulas  which are true in
 $\mathfrak{B}$ iff there exists a string of the form (*) that satisfies (a) and (b).
In order to improve the readability of our formulas, we will write $\skilla$ in place of the biteral $\overline{ \boldsymbol{0} \boldsymbol{1}^{5} \boldsymbol{0} }$
and $\skillb$ in place of the biteral $\overline{ \boldsymbol{0} \boldsymbol{1}^{4} \boldsymbol{0} }$.

\begin{theorem}\label{teutendmelk}
The fragment $\msigf{3}{0}{2}{D}$  is undecidable.
\end{theorem}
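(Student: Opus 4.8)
The plan is to reduce $\xpcp$ to the problem of deciding membership in $\msigf{3}{0}{2}{D}$. Given an instance $I = \langle g_{1}, g_{1}^{\prime}\rangle, \ldots , \langle g_{n}, g_{n}^{\prime}\rangle$, I will construct, uniformly and computably in $I$, a $\sigf{3}{0}{2}$-sentence $\phi_{I}$ such that $\mathfrak{D} \models \phi_{I}$ iff $I$ has a solution; since $\xpcp$ is undecidable (Post \cite{post}) and the map $I \mapsto \phi_{I}$ is computable, this yields the theorem. By the discussion preceding the theorem, $I$ has a solution iff there is a bit string $s$ of the form (*) satisfying (A), (B) and (C), so it suffices to express ``there exists such an $s$'' by a $\sigf{3}{0}{2}$-sentence. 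The whole construction rests on Lemma \ref{temedmelk}: after passing to the $N$-transformed instance we may assume every content block is an $N$-image, and since $N(\boldsymbol{0}) = \boldsymbol{0}\boldsymbol{1}\boldsymbol{0}$ and $N(\boldsymbol{1}) = \boldsymbol{0}\boldsymbol{1}^{2}\boldsymbol{0}$ never produce four consecutive $\boldsymbol{1}$'s, neither of the markers $\skilla = \overline{\boldsymbol{0}\boldsymbol{1}^{5}\boldsymbol{0}}$ nor $\skillb = \overline{\boldsymbol{0}\boldsymbol{1}^{4}\boldsymbol{0}}$ can occur inside any content block. This rigidity is what makes the boundaries of the super-blocks $\skilla N(a_{k}) \skillb N(b_{k}) \skilla$ uniquely locatable by prefix conditions alone.

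Concretely, I would take $\phi_{I}$ of the shape
$$\phi_{I} \; \equiv \; \exists s\, \exists z\, \exists w\, \big[\, \Phi_{\mathrm{B}}(s) \;\wedge\; \Phi_{\mathrm{A}}(s,z,w) \;\wedge\; (\forall u \preceq s)(\forall v \preceq s)\,\Phi_{\mathrm{C}}(s,u,v) \,\big]\; ,$$
using the three unbounded existential quantifiers $s, z, w$ and the two bounded universal quantifiers $u, v$, and no bounded existential quantifiers, so that $\phi_{I}$ is a $\sigf{3}{0}{2}$-sentence. Here $\Phi_{\mathrm{B}}$, the base condition (B), is the finite disjunction $\bigvee_{j=1}^{n} \big(\skilla\, \overline{N(g_{j})}\, \skillb\, \overline{N(g_{j}^{\prime})}\, \skilla \preceq s\big)$, asserting that the first super-block of $s$ is one of the tiles. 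The end condition (A), $\Phi_{\mathrm{A}}$, is the single equation $s = z\, \skilla\, w\, \skillb\, w\, \skilla$, asserting that the last super-block of $s$ has equal first and second components ($w$ plays the role of $N(a_{m}) = N(b_{m})$), which is precisely the requirement that $s$ witnesses a genuine solution. The matrix $\Phi_{\mathrm{C}}$ is the well-formedness-plus-step condition (C): ranging $u$ and $v$ over prefixes of $s$ that single out two consecutive super-blocks, it demands $\bigvee_{j=1}^{n}\big(N(a^{\prime}) = N(a)N(g_{j}) \,\wedge\, N(b^{\prime}) = N(b)N(g_{j}^{\prime})\big)$ for the corresponding contents, i.e.\ that each super-block is obtained from its predecessor by appending a single tile.

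Once the formula is in place, correctness is routine: by (A), (B), (C) and the discussion before the theorem, $\mathfrak{D} \models \phi_{I}$ holds exactly when some $s$ encodes the transcript $N(a_{1}), N(b_{1}), \ldots , N(a_{m}), N(b_{m})$ of a solution of $I$, so $\phi_{I}$ is true in $\mathfrak{D}$ iff $I$ is solvable. The conjunctions of equations that arise (for instance the two equations inside each disjunct of $\Phi_{\mathrm{C}}$) can be folded into single equations by Lemma \ref{sulten}, keeping the matrix quantifier-free and within the $\Sigma$-fragment.

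The step I expect to be the real obstacle is the faithful encoding of condition (C) as the two-bounded-universal matrix $\Phi_{\mathrm{C}}$. The difficulty is that in $\mathfrak{D}$ we only have the prefix relation $\preceq$, so we cannot directly quantify over the substrings that are the individual super-blocks, nor can we spend a bounded existential quantifier to ``read off'' the content of a block once we have located its boundaries --- the budget forbids it. The key to getting around this is exactly the marker rigidity supplied by Lemma \ref{temedmelk}: because $\skilla$ and $\skillb$ occur only at block boundaries, a prefix of $s$ that ends in the marker pattern already determines the decomposition into super-blocks unambiguously, so the two universally quantified prefixes $u \preceq v$ can be forced (by an implication whose antecedent is a conjunction of prefix atoms) to delimit two \emph{adjacent} super-blocks, and the comparison of their contents up to one appended tile collapses into the finite disjunction over $j$ together with word equations of the form $N(a^{\prime}) = N(a)N(g_{j})$. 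Verifying that this delimitation is genuinely expressible without any auxiliary existential quantifier, and that the resulting matrix really does pin down consecutive blocks rather than arbitrary pairs, is the delicate heart of the argument.
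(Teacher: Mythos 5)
There is a genuine gap, and it sits exactly where you flag it in your last paragraph. Your architecture pulls all three unbounded existentials to the front and leaves a quantifier-free matrix $\Phi_{\mathrm{C}}(s,u,v)$ under the two bounded universal quantifiers. But with only the variables $s,z,w,u,v$ available, the atoms of $\Phi_{\mathrm{C}}$ are equations and prefix relations between concatenations of these variables and constant strings, and no such atom can refer to the \emph{contents} of the super-block following a boundary $u$: those contents are middle segments of $s$, and in $\mathfrak{D}$ a middle segment can only be named by existentially quantifying it (as in $v = u \skilla w_1 \skillb w_2$). For the same reason you cannot even force $u$ and $v$ to delimit \emph{adjacent} super-blocks: ``no $\skilla$-boundary occurs strictly between them'' is a negative condition about the interior of $s$, which again needs a quantifier over prefixes. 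Likewise, the marker rigidity you invoke is not a free meta-fact; in the formula it must be enforced by a condition like $\psi(x) \equiv (\forall z \preceq x)(z\,\overline{\boldsymbol{1}^4} \not\preceq x)$, which costs a bounded universal quantifier you no longer have. So condition (C) has no evident rendering in your prenex shape, and your entire quantifier budget is spent before you reach it.

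The paper's proof escapes precisely by not using a prenex-like form. Since a $\sigf{3}{0}{2}$-formula merely has to contain three unbounded existentials, no bounded existentials and two bounded universals \emph{anywhere} (nesting is unconstrained), the paper uses the pattern $(\exists u)(\forall v \preceq u)(\exists w_1, w_2)[\ldots]$: after the bounded universal has located a boundary $v$ (via the test $v\skilla \preceq u$), the two remaining existentials extract the contents $w_1,w_2$ of the next super-block; the condition $\psi(w_1 w_2)$ --- consuming the second bounded universal --- makes these witnesses unique; and condition (C) then collapses to the single prefix atom $v \skilla w_1 \skillb w_2 \skilla w_1 x_j \skillb w_2 y_j \skilla \preceq u$, in which the expected next block is written out verbatim using $w_1,w_2$. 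Condition (A) also comes for free as the disjunct $w_1 = w_2$ (only it can hold at the final block, since nothing follows the last $\skilla$ in $u$), so no separate witnesses $z,w$ are needed at all. To salvage your proof, move two of your existentials inside the first bounded universal and spend your second universal on the no-$\overline{\boldsymbol{1}^4}$ condition; as it stands, the step you call the ``delicate heart'' of the argument is not delicate but missing.
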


\begin{proof}
Let $\psi(x) \equiv (\forall z \preceq x) (  z    \overline{  \boldsymbol{1}^{4} } \not\preceq x)$.
Observe that $\psi$ contains one bounded universal quantifier. Observe  that $\psi(\overline{b})$ is true in $\mathfrak{D}$ iff
the bit string $b$ does not contain 4 consecutive ones. Furthermore, let 
 $\phi_{n} (x_{1},...,x_{n},y_{1},...,y_{n})\equiv$ 
\begin{align*} 
(\exists u) & \Big( \ 
 \left( \bigvee_{j=1}^{n}  \skilla   x_{j}  \skillb  y_{j}  \skilla  \preceq u \ \right)   \;\; \wedge \;\;  
\\
 & (\forall v \preceq u) \Big[   \ v \skilla \not\preceq u 
 \;\;  \vee \;\; v \skilla = u   \;\;  \vee \;\; 
 (\exists w_1, w_2) \Big\{ \ 
v    \skilla    w_1    \skillb    w_2    \skilla \preceq u  
\; \wedge  \;  \\ 
&     \psi(w_1 w_2) \; \wedge \; \left[\; w_1 = w_2 \; \vee \; 
\left( \bigvee_{j=1}^{n} 
v    \skilla    w_1     
\skillb    w_2     
\skilla    
 w_1    x_{j}    
\skillb    w_2    y_{j}    
\skilla \preceq u \right)   \right] \Big\}  \ \Big] \  \Big) \; .
\end{align*}

Let $\langle g_{1}, g_{1}^{\prime} \rangle,\ldots , \langle g_{n}, g_{n}^{\prime} \rangle$ be an instance of $\xpcp$.
We have
$$
\mathfrak{D} \models \phi_{n}( \overline{N(g_{1})},\ldots ,\overline{N(g_{n})}, 
\overline{N(g_{1}^{\prime})},\ldots ,\overline{N(g_{n}^{\prime} )} )  
$$
iff there exists a bit sting of the form (*) that satisfies (A), (B) and (C) iff the instance $\langle g_{1}, g_{1}^{\prime} \rangle,\ldots , \langle g_{n}, g_{n}^{\prime} \rangle$ has a solution. 
Furthermore, $\phi_n$ is a $\sigf{3}{0}{2}$-formula. It follows that the fragment $\msigf{3}{0}{2}{D}$  is undecidable.
\qed
\end{proof}

\begin{theorem}\label{ateutendmelk}
The fragments $\msigf{1}{2}{1}{B}$ and $\msigf{1}{0}{2}{B}$ are undecidable.
\end{theorem}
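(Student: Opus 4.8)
The plan is to reduce $\xpcp$ to each of the two fragments, following the strategy described before the theorem: for an instance $\langle g_1,g_1'\rangle,\ldots,\langle g_n,g_n'\rangle$ I will exhibit a $\Sigma$-sentence that holds in $\mathfrak{B}$ exactly when there is a string of the form (*) satisfying (a) and (b), which by Lemma \ref{temedmelk} and the discussion above holds iff the instance has a solution. Because the two target fragments have incomparable quantifier profiles, I expect to need two separate sentences built on the same idea, one of shape $\sigf{1}{0}{2}$ and one of shape $\sigf{1}{2}{1}$.

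The decisive simplification over the $\mathfrak{D}$-proof of Theorem \ref{teutendmelk} is that in $\mathfrak{B}$ the substring relation is primitive, so several things that cost quantifiers there become quantifier-free here. In particular ``$t$ occurs in $u$'' is just $t\sqsubseteq u$, and ``$x$ contains no four consecutive ones'' is the single negated-atomic formula $\overline{\boldsymbol{1}^4}\not\sqsubseteq x$, for which I write $\chi(x)$; this replaces the bounded-quantifier predicate $\psi$ used for $\mathfrak{D}$. Consequently condition (a) can be written with no quantifiers at all, as the disjunction $\bigvee_{j=1}^{n}\skilla\,\overline{N(g_j)}\,\skillb\,\overline{N(g_j')}\,\skilla\sqsubseteq u$.

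For $\msigf{1}{0}{2}{B}$ I would take
$$\phi_n \;\equiv\; (\exists u)\Big[\ \Big(\bigvee_{j=1}^{n}\skilla\,\overline{N(g_j)}\,\skillb\,\overline{N(g_j')}\,\skilla\sqsubseteq u\Big)\ \wedge\ (\forall v_1\sqsubseteq u)(\forall v_2\sqsubseteq u)\big[\,H\rightarrow C\,\big]\ \Big]$$
where $H\equiv(\skilla\,v_1\,\skillb\,v_2\,\skilla\sqsubseteq u\wedge\chi(v_1)\wedge\chi(v_2))$ and $C\equiv(v_1=v_2\vee\bigvee_{j=1}^{n}\skilla\,v_1\,\overline{N(g_j)}\,\skillb\,v_2\,\overline{N(g_j')}\,\skilla\sqsubseteq u)$. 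Reading $v_1,v_2$ as candidate values for $N(a)$ and $N(b)$, $H$ says $\skilla v_1\skillb v_2\skilla$ is a ``clean'' block and $C$ is exactly the match-or-extend alternative of (b); note that $\chi(v_1)$ already forces $v_1$ to be free of the delimiters $\skilla,\skillb$. Rewriting $H\rightarrow C$ as $\neg H\vee C$ and observing that $\neg H$ is a disjunction of atomic and negated-atomic formulas shows $\phi_n$ is a $\Sigma$-sentence with profile one unbounded $\exists$, no bounded $\exists$, two bounded $\forall$. For $\msigf{1}{2}{1}{B}$ I would instead use a single bounded $\forall v$ ranging over block contents together with two bounded existentials to locate the separating $\skillb$:
$$\phi_n' \;\equiv\; (\exists u)\Big[\ (a)\ \wedge\ (\forall v\sqsubseteq u)\big[\,(\skilla v\skilla\sqsubseteq u\wedge\skilla\not\sqsubseteq v)\rightarrow(\exists v_1\sqsubseteq u)(\exists v_2\sqsubseteq u)\,D\,\big]\ \Big]$$
with $D\equiv(v=v_1\skillb v_2\wedge\chi(v_1)\wedge\chi(v_2)\wedge(v_1=v_2\vee\bigvee_{j}\skilla v_1\overline{N(g_j)}\skillb v_2\overline{N(g_j')}\skilla\sqsubseteq u))$. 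The guard $\skilla\not\sqsubseteq v$ is essential: it rules out $v$ spanning several blocks, for which no $\chi$-clean split $v=v_1\skillb v_2$ exists. Here the existentials occur in positive position inside the consequent, so the profile is one unbounded $\exists$, two bounded $\exists$, one bounded $\forall$.

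The correctness argument, which I expect to be the main obstacle, has two halves. First is a \emph{unique-parsing} lemma: since every $N$-image has runs of ones of length at most two, while $\skilla$ and $\skillb$ carry runs of length five and four, all occurrences of the patterns $\skilla$ and $\skillb$ in the intended string (*) sit precisely at genuine delimiters (one checks, e.g., that $\overline{\boldsymbol{0}\boldsymbol{1}^4\boldsymbol{0}}\not\sqsubseteq\overline{\boldsymbol{0}\boldsymbol{1}^5\boldsymbol{0}}$, so a $\skillb$ cannot be read inside a $\skilla$); hence the only substrings matching $\skilla v_1\skillb v_2\skilla$ with $\chi(v_1),\chi(v_2)$ are the genuine blocks $\skilla N(a_k)\skillb N(b_k)\skilla$. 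This yields the forward direction: a genuine solution produces a $u$ in which every clean block is a real block, so $H\rightarrow C$ (respectively the guarded implication) holds throughout. Second is a \emph{termination} argument for the converse: given any $u$ satisfying the sentence, condition (a) seeds a block for a length-one index sequence, and repeated use of the match-or-extend clause yields a chain of blocks $\skilla N(a_k)\skillb N(b_k)\skilla\sqsubseteq u$ of strictly increasing length (assuming, harmlessly, that no pair $\langle g_j,g_j'\rangle$ is doubly empty); as all are substrings of the fixed finite $u$, the chain must stop, and it can stop only at a block with $N(a_k)=N(b_k)$, i.e.\ at a solution of $\langle N(g_i),N(g_i')\rangle$, hence of the original instance by Lemma \ref{temedmelk}. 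Since $\phi_n$ and $\phi_n'$ are computed uniformly from the instance and $\xpcp$ is undecidable, both fragments $\msigf{1}{0}{2}{B}$ and $\msigf{1}{2}{1}{B}$ are undecidable.
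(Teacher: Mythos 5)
Your proposal is correct and follows essentially the same approach as the paper: a reduction from $\xpcp$ using the same $N$-encoding and delimiters $\skilla,\skillb$, with your $\phi_n$ and $\phi_n'$ being minor variants of the paper's formulas $\gamma_n$ (for $\msigf{1}{0}{2}{B}$) and $\psi_n$ (for $\msigf{1}{2}{1}{B}$), and with the same implicit unique-parsing and match-or-extend correctness argument. The only notable deviation is that the paper's condition (a) additionally forces the starting block to be a prefix of $u$ (via the conjuncts $0\,\skilla\, x_{j}\,\skillb\, y_{j}\,\skilla \not\sqsubseteq u$ and $1\,\skilla\, x_{j}\,\skillb\, y_{j}\,\skilla \not\sqsubseteq u$ in $\alpha$), whereas your chain-termination argument shows the plain substring requirement already suffices.
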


\begin{proof}
Let $\vec{x} = x_{1},\ldots ,x_{n}$, let  $\vec{y}= y_{1},\ldots ,y_{n}$ and let
$$
\alpha(\vec{x},\vec{y},z) \;\; \equiv  \;\;\left( \bigvee_{j=1}^{n}  \; \skilla    x_{j}    \skillb    y_{j}    \skilla \sqsubseteq z \; \wedge\; 
  0   \skilla    x_{j}    \skillb    y_{j}    \skilla \not\sqsubseteq z \;\wedge \;
  1   \skilla    x_{j}    \skillb    y_{j}    \skilla \not\sqsubseteq z   \right)  \; .
$$
Consider the  $\sigf{1}{2}{1}$-formula $\psi_{n}(\vec{x},\vec{y}) \equiv$  
\begin{align*}
 (\exists u) & \Big( \ \alpha(\vec{x},\vec{y},u) \; \wedge \\ &
(\forall v \sqsubseteq u) \Big[ \ 
    \skilla    v    \skilla \not\sqsubseteq u  
\;\; \vee \;\; \overline{ \boldsymbol{1}^{5}  } \sqsubseteq v 
  \;\; \vee \;\;
 (\exists w_{1}, w_{2} \sqsubseteq v)  \Big\{ \ v =  w_{1}    \skillb    w_{2} \\ &  
    \wedge \, 
   \overline{ \boldsymbol{1}^{4} } \not\sqsubseteq w_{1} \, \wedge \,
   \overline{ \boldsymbol{1}^{4}  } \not\sqsubseteq w_{2} \,  \wedge \, 
\left[ \ w_1 = w_2 \, \vee \, 
  \left( \bigvee_{j=1}^{n}   \skilla    w_1    x_{j}    \skillb    w_2    y_{j}    \skilla \sqsubseteq u \right) \ \right]     \   \Big\} 
   \ \Big] \  \Big)  
\end{align*}
and  consider the  $\Sigma_{1}^{1,0 ,2 }$-formula $\gamma_{n} (\vec{x},\vec{y})\equiv $
\begin{align*}
(\exists u)  & \Big( \ \alpha(\vec{x},\vec{y},u) \; \wedge \;
(\forall w_{1}, w_{2} \sqsubseteq u) 
\Big\{ \   
 \skilla    w_{1}    \skillb    w_{2}    \skilla \not\sqsubseteq u \;\;  \vee \;\;
     \overline{  \boldsymbol{1}^{4} } \sqsubseteq w_{1}    w_{2}     
 \\ & \;\;\; \;\;\; \;\;\; \;\;\; \;\;\;\;\;\;\;\;\;\;\;\; \;\;\; \;\;\; \;\;\; \;\;\; \;\;\; \vee \;\; 
   \ w_{1} = w_{2} \; \vee \; 
   \left( \bigvee_{j=1}^{n}  
\skilla    w_{1}    x_{j}    
\skillb    w_{2}    y_{j}    
\skilla  \sqsubseteq u  \right)  
 \ \Big\}  \  \Big)  .
\end{align*}

Let $\langle g_{1}, g_{1}^{\prime} \rangle,\ldots , \langle g_{n}, g_{n}^{\prime} \rangle$ be an instance of $\xpcp$.
We have
$$
\mathfrak{B} \models \psi_{n}( \overline{N(g_{1})},\ldots ,\overline{N(g_{n})}, 
\overline{N(g_{1}^{\prime})},\ldots ,\overline{N(g_{n}^{\prime} )} )  
$$
iff 
$$
\mathfrak{B} \models \gamma_{n}( \overline{N(g_{1})},\ldots ,\overline{N(g_{n})}, 
\overline{N(g_{1}^{\prime})},\ldots ,\overline{N(g_{n}^{\prime} )} )  
$$
iff
there exists a bit sting of the form (*) that satisfies (a) and (b) iff the instance 
$\langle g_{1}, g_{1}^{\prime} \rangle,\ldots, \langle g_{n}, g_{n}^{\prime} \rangle$ has a solution. 
It follows that the  fragments $\msigf{1}{2}{1}{B}$ and $\msigf{1}{0}{2}{B}$ are undecidable.
\qed
\end{proof}

The proof of the next theorem is based on the following idea:
 The instance  $\langle g_{1}, g_{1}^{\prime} \rangle,\ldots, \langle g_{n}, g_{n}^{\prime} \rangle$  of $\xpcp$ 
has a solution iff there exists a bit string of the form
\begin{multline*}
\boldsymbol{0} \boldsymbol{1}^{5} \boldsymbol{0}
 N(a_{1}) \boldsymbol{0} \boldsymbol{1}^{4} \boldsymbol {0} N(b_{1}) \boldsymbol{0} \boldsymbol{1}^{6}\boldsymbol {0}
 N(a_{2}) \boldsymbol{0} \boldsymbol{1}^{4} \boldsymbol {0} N(b_{2}) \boldsymbol{0} \boldsymbol{1}^{7} \boldsymbol {0} \ldots \\
\ldots \boldsymbol{0} \boldsymbol{1}^{5+m-1} \boldsymbol {0}
N(a_{m}) \boldsymbol{0} \boldsymbol{1}^{4} \boldsymbol {0} N(b_{m}) \boldsymbol{0} \boldsymbol{1}^{5+m} \boldsymbol {0} 
\end{multline*}
with the properties (A), (B) and (C) given above.

\begin{theorem}\label{gronnte}
The fragment $\msigf{4}{1}{1}{D}$  is undecidable.
\end{theorem}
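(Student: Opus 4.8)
The plan is to reduce $\xpcp$ to $\msigf{4}{1}{1}{D}$ in exactly the manner of Theorem~\ref{teutendmelk}, but replacing the single separator $\skilla$ by the increasing separators $\overline{\boldsymbol{0}\boldsymbol{1}^{5}\boldsymbol{0}},\overline{\boldsymbol{0}\boldsymbol{1}^{6}\boldsymbol{0}},\overline{\boldsymbol{0}\boldsymbol{1}^{7}\boldsymbol{0}},\ldots$ of the encoding displayed just before the theorem. For an instance $\langle g_1,g_1'\rangle,\ldots,\langle g_n,g_n'\rangle$ I would write a $\sigf{4}{1}{1}$-formula $\phi_n(\vec{x},\vec{y})$ of the shape
\[
(\exists u)\big[\ \mathrm{init}\ \wedge\ (\forall v \preceq u)\,(\exists w_1)(\exists w_2)(\exists w_3)(\exists w \preceq u)\,[\ \cdots\ ]\ \big]
\]
and prove that $\mathfrak{D}\models \phi_n(\overline{N(g_1)},\ldots,\overline{N(g_n')})$ holds iff a string of the form displayed just before the theorem satisfying (A), (B) and (C) exists, iff the instance has a solution. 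As in Theorem~\ref{teutendmelk}, the outer $(\exists u)$ guesses the witness string, the conjunct $\mathrm{init}\equiv\bigvee_{j=1}^{n}\skilla\,x_j\,\skillb\,y_j\,\overline{\boldsymbol{0}\boldsymbol{1}^{6}\boldsymbol{0}}\preceq u$ pins the first block (property (B)), and the single bounded universal $(\forall v\preceq u)$ walks to each block boundary.

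The whole point of the increasing separators is to discharge, without a second bounded universal, the task performed by $\psi$ in Theorem~\ref{teutendmelk} (which forbade four consecutive ones at the cost of a $(\forall z\preceq x)$). In the new encoding the trailing separator $\overline{\boldsymbol{0}\boldsymbol{1}^{5+k}\boldsymbol{0}}$ of block $k$ carries exactly one more $\boldsymbol{1}$ than its leading separator $\overline{\boldsymbol{0}\boldsymbol{1}^{5+k-1}\boldsymbol{0}}$, and I would capture their common run of ones by the unbounded existential $w_3$, so that the two separators become the terms $0 w_3 0$ and $0 w_3 1 0$. The body of the universal, for each $v$ ending immediately before a block separator, would then existentially produce $w_1,w_2,w_3$ and check: (i) the block is flanked on the left by $0 w_3 0$ and on the right by $0 w_3 1 0$; and (ii) either $w_1=w_2$ (terminal block, property (A)) or, for some $j$, the next block equals $w_1 x_j\,\skillb\,w_2 y_j$ (property (C)). Expressing the prefix ``substring-from-here'' relations needs the extra existential, and the bounded existential $w\preceq u$ is spent locating the leading separator $0 w_3 0$ inside the prefix $v$.

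For correctness the forward direction is routine: a solution yields the canonical string, which satisfies every conjunct. The substance is soundness, which I would establish by induction from the left. The conjunct $\mathrm{init}$ pins the first block to a clean pair $N(g_j),N(g_j')$ with leading core $\boldsymbol{1}^{5}$; the check in (i)--(ii) at the boundary after block $k$ then forces the core to be $\boldsymbol{1}^{5+k-1}$ and the next block to be $w_1 x_j\,\skillb\,w_2 y_j$, so its data is the previous data followed by the clean factor $N(g_j)$ and hence again contains no run of three ones. Consequently the block separators are precisely the runs of at least five ones, they are pairwise distinct, and the decomposition of $u$ into blocks is forced --- this is exactly the unambiguity that $\psi$ purchased in Theorem~\ref{teutendmelk}, now extracted from the separator arithmetic rather than from a global scan. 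Properties (A), (B), (C) then transfer as before, and since $\phi_n$ is a $\sigf{4}{1}{1}$-formula the undecidability of $\xpcp$ yields the undecidability of $\msigf{4}{1}{1}{D}$.

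The main obstacle is the soundness bookkeeping under the reduced quantifier budget. With only one bounded universal and one bounded existential I must, at a single boundary $v$, simultaneously pin both the leading and the trailing separator of the block --- strings of unbounded and a priori unknown length --- certify that they are $0 w_3 0$ and $0 w_3 1 0$, and still stop the existential witnesses from choosing a spurious split of some $w_1\,\skillb\,w_2$. I expect the delicate point to be verifying that the within-block marker $\skillb$ remains parsed unambiguously once $\psi$ is gone: this should follow because the data, inherited clean through the successor check, contains no run of four ones, so $\skillb$ is the unique run of length exactly four inside each block --- but turning this into an airtight argument at \emph{every} prefix $v$, including the non-boundary ones, is the crux of the matter.
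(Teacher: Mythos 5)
You have reconstructed the paper's architecture exactly: the same increasing-separator encoding, an outer $(\exists u)$, an initialization disjunction pinning the first block, a single $(\forall v \preceq u)$ that fires at block boundaries, a bounded existential locating the leading separator, and an unbounded existential (your $w_3$, the paper's $y$) carrying the run of ones shared by the two separators of a block. But there is a genuine gap at precisely the point where the construction carries its weight: nothing in your sketch forces $w_3$ to lie in $\{\boldsymbol{1}\}^*$. ``Capturing the common run of ones by the unbounded existential $w_3$'' is a semantic intention, not a first-order constraint; in the soundness direction a dishonest witness $u$ can be certified with values of $w_3$ that contain zeros, so that $0\,w_3\,0$ is not a separator at all, spurious boundaries can be planted inside the data, and the left-to-right induction you outline (flank checks forcing the core to be $\boldsymbol{1}^{5+k-1}$) never gets off the ground --- this is exactly the unresolved ``crux'' you point to at the end. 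The missing idea, which the paper supplies and explicitly remarks on, is that $\{\boldsymbol{1}\}^*$ is definable by a single quantifier-free equation: $y\in\{\boldsymbol{1}\}^*$ iff $\mathfrak{D}\models 1y=y1$. The paper's boundary check is
$$ v= z\,0\,y\,\overline{\boldsymbol{1}^{5}\boldsymbol{0}}\,w_1\,\bang{4}\,w_2\,01y \;\;\wedge\;\; 1y=y1 $$
with $z \preceq v$ bounded and $w_1,w_2,y$ unbounded, so the ones-constraint costs no quantifiers and the count stays $\sigf{4}{1}{1}$.

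A second omission, smaller but still necessary: you never specify how the single bounded universal recognizes that $v$ ``ends immediately before a block separator''. The paper makes the body vacuous away from the boundaries via the disjuncts $v\,\overline{\boldsymbol{1}^{5}\boldsymbol{0}}\not\preceq u \,\vee\, v=0$: the check fires exactly at those prefixes $v$ that are followed by exactly five ones and then a zero, which happens exactly once inside each separator $\boldsymbol{0}\boldsymbol{1}^{5+k}\boldsymbol{0}$, while $v=0$ exempts the initial one. This literal trigger is also why the paper writes both separators with the tail $\overline{\boldsymbol{1}^{5}\boldsymbol{0}}$ explicit and only the surplus ones in $y$ (leading $\boldsymbol{0}y\boldsymbol{1}^{5}\boldsymbol{0}$, trailing $\boldsymbol{0}\boldsymbol{1}y\boldsymbol{1}^{5}\boldsymbol{0}$): the displayed equation then ends in $01y$ precisely because $v$ stops five ones short of the trailing zero, and the successor disjunct can be written $v\,\overline{\boldsymbol{1}^{5}\boldsymbol{0}}\,w_1 x_j\,\bang{4}\,w_2 y_j\,011y\,\overline{\boldsymbol{1}^{5}\boldsymbol{0}}\preceq u$, with no further quantifiers. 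With these two devices your plan becomes the paper's proof; without them it cannot even be turned into a formula, let alone a sound one.
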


\begin{proof}
Let $\bang{k}\equiv \overline{\boldsymbol{0} \boldsymbol{1}^{k} \boldsymbol {0}}$. The $\sigf{4}{1}{1}$-formula
\begin{align*} 
(\exists u)  \Big( \ &
 \left( \bigvee_{j=1}^{n}  \bang{5}   x_{j}  \bang{4}  y_{j}  \bang{6}  \preceq u \ \right)   \;\; \wedge \;\;  
 (\forall v \preceq u) \Big[   \ v \overline{\boldsymbol{1}^{5} \boldsymbol {0}} \not\preceq u 
 \;\;  \vee \;\; v  = 0   \;\;  \vee \;\; \\
& (\exists w_1, w_2, y)  (\exists z\preceq v)\Big\{ \
v= z0y\overline{\boldsymbol{1}^{5} \boldsymbol {0}}w_1\bang{4}w_201y \; \wedge \; 1y=y1
\; \wedge  \;  \\ 
& \;\;\;\;\;\;\;\;\;\;\;\;\;\;\;\;\;\;\;\;\; \left[\; w_1 = w_2 \; \vee \; 
\left( \bigvee_{j=1}^{n} 
v    \overline{\boldsymbol{1}^{5} \boldsymbol {0}}   w_1    x_{j}    
\bang{4}    w_2    y_{j} 011 y \overline{\boldsymbol{1}^{5} \boldsymbol {0}}
\preceq u 
\right)   \right] \Big\}  \ \Big] \  \Big) 
\end{align*}
yields the desired statement. Note that $y$ is a solution of the equation $\boldsymbol{1}y=y \boldsymbol{1}$ 
iff $y\in\{ \boldsymbol{1}\}^*$.
\qed
\end{proof}

\section{Proof of Theorem \ref{bsigcomp}: $\Sigma$-Completeness of $B$}

\label{bsigcompproof}

\begin{lemma} \label{gammelbtre}
$$  B_1, B_2, B_4 
\vdash \forall x[ \ x 0 \neq e \wedge   x1 \neq e \ ] \; . $$
\end{lemma}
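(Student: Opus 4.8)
The plan is to establish the two conjuncts $\forall x[\,x0\neq e\,]$ and $\forall x[\,x1\neq e\,]$ separately, each by reductio. The only nontrivial axiom in play is the ``last-bit'' axiom $B_4$, which forbids a string ending in $0$ from being equal to a string ending in $1$; the identity axiom $B_1$ and associativity $B_2$ will be used only to massage terms into the exact syntactic shapes $(\cdot)0$ and $(\cdot)1$ demanded by $B_4$.

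For the first conjunct I would fix $x$ and assume, towards a contradiction, that $x0=e$. Left-multiplying by the constant $1$ and applying $B_1$ (in the forms $1e=1$ and $e1=1$) yields $1(x0)=1e=1=e1$. Associativity $B_2$ rewrites the left-hand side as $(1x)0$, so $(1x)0=e1$. This contradicts the instance of $B_4$ obtained by taking its $x$ to be $1x$ and its $y$ to be $e$, namely $(1x)0\neq e1$. Hence $x0\neq e$.

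The second conjunct is handled symmetrically, interchanging the roles of $0$ and $1$: assuming $x1=e$, I would left-multiply by $0$ and use $B_1$ to get $0(x1)=0e=0=e0$, then $B_2$ gives $(0x)1=e0$, which contradicts the $B_4$-instance $e0\neq(0x)1$ (its $x$ taken to be $e$, its $y$ taken to be $0x$). I expect no real obstacle here: the whole argument is term bookkeeping, and the only point requiring care is applying $B_1$ in both directions so that one side is literally of the form $(\cdot)0$ and the other literally of the form $(\cdot)1$, after which $B_4$ applies verbatim.
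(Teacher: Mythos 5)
Your proof is correct and takes essentially the same route as the paper's: assume $x0=e$, left-multiply by $1$, use $B_1$ and $B_2$ to obtain $(1x)0=e1$, and contradict the $B_4$-instance with $x:=1x$, $y:=e$, with a symmetric argument for $x1\neq e$. The only (immaterial) difference is that the paper phrases the computation inside an arbitrary model of $\{B_1,B_2,B_4\}$ and then invokes the Completeness Theorem, whereas you carry it out as direct equational reasoning.
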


\begin{proof}
We reason in an arbitrary model for $\{ B_1, B_2, B_4 \}$.
Let $x$ be an arbitrary element in the universe.
Assume $x0=e$. Then $1(x0)=1e$. By $B_1$, we have $1(x0)=1$.
By $B_2$, we have $(1x)0=1$. By $B_1$, we have  $(1x)0=e1$.
This contradicts $B_4$. This proves that $x0\neq e$.
A symmetric argument shows that  $x1\neq e$. 
This proves that
$$  B_1, B_2, B_4 
\models  \forall x[ \ x 0 \neq e \wedge   x1 \neq e \ ]  \; . $$
The lemma follows by the Completeness Theorem for first-order logic. 
\qed
\end{proof}

\begin{lemma} \label{lemmaaen}
For any variable-free $\mathcal{L}_{BT}$-term $t$ there exists a biteral $b$ such that 
$B\vdash t=b$. Furthermore,  we have
\[\mathfrak{B} \models t_{1} = t_{2} \ \Rightarrow \  B\vdash t_{1} = t_{2} \]
for any variable-free $\mathcal{L}_{BT}$-terms $t_{1}$ and $t_{2}$.
\end{lemma}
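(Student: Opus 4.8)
The plan is to prove both statements by structural induction, with the first (existence of a normal-form biteral) doing essentially all the work and the second following by a short soundness argument. The workhorse is a sub-lemma asserting that the concatenation of two biterals is provably equal to a biteral: for biterals $b_1 = \overline{c}$ and $b_2 = \overline{d}$ I claim $B \vdash b_1 \circ b_2 = \overline{cd}$, where $cd$ is the concatenation of the underlying bit strings. I would prove this by induction on the structure of $b_2$ (equivalently, on $|d|$). The base case $d = \varepsilon$ gives $b_2 = e$, and $B \vdash b_1 \circ e = b_1 = \overline{c}$ directly from $B_1$. For the step $d = d'\boldsymbol{0}$ we have $b_2 = \overline{d'} \circ 0$, so $b_1 \circ b_2 = b_1 \circ (\overline{d'} \circ 0)$; axiom $B_2$ (associativity) rewrites this to $(b_1 \circ \overline{d'}) \circ 0$, the induction hypothesis gives $B \vdash b_1 \circ \overline{d'} = \overline{cd'}$, and hence $B \vdash (b_1 \circ \overline{d'}) \circ 0 = \overline{cd'} \circ 0 = \overline{cd'\boldsymbol{0}} = \overline{cd}$. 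The case $d = d'\boldsymbol{1}$ is identical with $1$ in place of $0$.

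With the sub-lemma available, the first claim follows by induction on the structure of the variable-free term $t$. If $t = e$, take $b = e = \overline{\varepsilon}$, and $B \vdash e = e$ by reflexivity. If $t = 0$, take $b = \overline{\boldsymbol{0}} = e \circ 0$, and $B \vdash 0 = e \circ 0$ is an instance of $B_1$; the case $t = 1$ is symmetric. If $t = t_1 \circ t_2$, the induction hypothesis supplies biterals $b_1, b_2$ with $B \vdash t_1 = b_1$ and $B \vdash t_2 = b_2$, so congruence of equality gives $B \vdash t = b_1 \circ b_2$, and the sub-lemma converts $b_1 \circ b_2$ into a single biteral $b$ with $B \vdash t = b$.

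For the second claim I would combine the first claim with the soundness of $B$. Given variable-free terms $t_1, t_2$ with $\mathfrak{B} \models t_1 = t_2$, choose biterals $b_1, b_2$ with $B \vdash t_1 = b_1$ and $B \vdash t_2 = b_2$. Every axiom of $B$ is true in $\mathfrak{B}$, so these derivations are sound and $\mathfrak{B} \models t_1 = b_1$ and $\mathfrak{B} \models t_2 = b_2$, whence $\mathfrak{B} \models b_1 = b_2$. A one-line induction shows that the interpretation of the biteral $\overline{c}$ in $\mathfrak{B}$ is exactly the string $c$, and $c \mapsto c$ is injective; therefore $b_1$ and $b_2$ are literally the same term. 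Reflexivity then gives $B \vdash b_1 = b_2$, and transitivity yields $B \vdash t_1 = t_2$.

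The only genuine obstacle is the concatenation sub-lemma, and within it the bookkeeping of the associativity rewrite; everything else is routine equality reasoning, so I expect the writeup to devote most of its length to that induction.
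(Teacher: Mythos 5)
Your proof is correct, and its overall skeleton coincides with the paper's: induction on the structure of $t$, base cases $e$, $0$, $1$ discharged via $B_1$, and the second claim obtained exactly as in the paper (soundness of $B$ in $\mathfrak{B}$, injectivity of the string-to-biteral interpretation, so $b_1$ and $b_2$ are the same term). The one place you genuinely diverge is the inductive step $t \equiv t_1 \circ t_2$. You discharge it with a purely syntactic sub-lemma, $B \vdash \overline{c} \circ \overline{d} = \overline{cd}$, proved by induction on the structure of $\overline{d}$ using $B_1$, $B_2$, congruence, and the fact that $\overline{cd'} \circ 0$ is literally the term $\overline{cd'\boldsymbol{0}}$. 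The paper instead argues semantically: it fixes an arbitrary model $\mathfrak{A} \models B$, claims $\mathfrak{A} \models b_1 \circ b_2 = b$ for the appropriate biteral $b$ by appealing to $B_2$ (to re-bracket) and $B_1$ (to delete the intermediate occurrence of $e$), and then converts the resulting $B \models t_1 \circ t_2 = b$ into $B \vdash t_1 \circ t_2 = b$ by the Completeness Theorem for first-order logic. The trade-off: the paper's model-theoretic detour compresses the whole re-bracketing into two sentences, but it leaves implicit the induction hidden in ``by $B_2$'' (repeated re-association) and it leans on completeness; your version makes that induction explicit and is completeness-free, which is arguably the cleaner fit for a lemma whose entire content is derivability in $B$. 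Both proofs are sound, and your estimate that the concatenation sub-lemma is where the real work lies matches where the paper, too, spends its effort.
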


\begin{proof}
We proceed by induction on the structure of $t$ to show that there exists a biteral $b$ such that $B\vdash t=b$.

If $t \equiv e$, let $b\equiv e$. Then $B \vdash e=e$. 

If $t \equiv 0$, let $b\equiv e \circ 0$. By $B_1$, we have $B \vdash 0= e\circ 0$.

If $t \equiv 1$, let $b\equiv e \circ 1$. By $B_1$, we have $B \vdash 1 = e\circ 1$.

Suppose $t\equiv t_{1} \circ t_{2}$.
Furthermore,  suppose there exist  biterals $b_{1}$ and $b_{2}$ such that $B\vdash t_{1}=b_{1}$ and $B\vdash t_{2}=b_{2}$. 
Then $B\vdash t_{1} \circ t_{2} =b_{1} \circ b_{2}$. 
We note that $b_{1} \circ b_{2}$ is of the form 
\[
    ( \ (\ldots (e\circ c_{1}) \circ \ldots ) \circ c_{n} \ ) \;\;\; \circ \;\;\; ( \ (\ldots (e\circ d_{1}) \circ \ldots ) \circ d_{m} \ )  
\]
where each $c_{i}$ and each $d_{j}$ is $0$ or $1$. 
Let $\mathfrak{A} \models B$. Then $\mathfrak{A} \models t_{1} \circ t_{2} = b_{1} \circ b_{2}$. By $B_2$ we have
\[
\mathfrak{A} \models b_{1} \circ b_{2} = 
( (\ldots ( (\ldots (e\circ c_{1}) \circ \ldots ) \circ c_{n}) \circ (e\circ d_{1}) ) \circ \ldots ) \circ d_{m})  .
\]
By $B_1$ we have
\[
\mathfrak{A} \models (b_{1} \circ b_{2}) = 
( (\ldots (  (\ldots (e\circ c_{1}) \circ \ldots ) \circ c_{n}) \circ d_{1} ) \circ \ldots ) \circ d_{m}) .
\]
Let 
\[b \equiv ( (\ldots (  (\ldots (e\circ c_{1}) \circ \ldots ) \circ c_{n}) \circ d_{1} ) \circ \ldots ) \circ d_{m}) .
\]
Then $b$ is a biteral and $\mathfrak{A} \models b_{1} \circ b_{2} = b$. 
Since $\mathfrak{A} \models t_{1}\circ t_{2}=b_{1}\circ b_{2}$, we have $\mathfrak{A} \models t_{1}\circ t_{2}=b$. 
Since $\mathfrak{A}$ is an arbitrary model for $B$,  we have $B\models t_{1}\circ t_{2}=b$, and then, 
by the Completeness Theorem for first-order logic, we have $B\vdash t_{1}\circ t_{2}=b$. 

This proves that there for any variable-free term $t$ there exists a biteral $b$ such that $B\vdash t=b$.

Let $t_{1}$ and $t_{2}$ be $\mathcal{L}_{BT}$-terms  such that $\mathfrak{B} \models t_{1} = t_{2}$. 
Then  there exist biterals $b_{1}$ and $b_{2}$ such that 
\[B\vdash t_{1}=b_{1} \mbox{    and     } B\vdash t_{2}=b_{2} .\]
Since $\mathfrak{B} \models B $, we have
\[\mathfrak{B} \models t_{1}=b_{1} \mbox{    and     } \mathfrak{B} \models t_{2}=b_{2} .
\]
and thus we also have $\mathfrak{B} \models  b_{1}=b_{2}$.
Since each element in $\{ \boldsymbol{0},  \boldsymbol{1}\}^*$ is mapped to a unique biteral, 
it follows that $b_{1}$ is the same biteral as $b_{2}$. Thus,  $B\vdash b_{1}=b_{2}$. 
Thus, $B\vdash t_{1}=t_{2}$. 
\qed
\end{proof}

\begin{lemma} \label{lemmaato}
We have
\[\mathfrak{B} \models \neg b_{1} = b_{2}  \ \Rightarrow \  B\vdash \neg b_{1} = b_{2}   \] 
for any biterals $b_{1}$ and $b_{2}$.
Furthermore, we have
\[\mathfrak{B} \models  \neg t_{1} = t_{2}  \ \Rightarrow \  B\vdash  \neg t_{1} = t_{2}  \]
for any variable-free $\mathcal{L}_{BT}$-terms $t_{1}$ and $t_{2}$.
\end{lemma}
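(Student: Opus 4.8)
The plan is to establish the two implications in order: first prove the statement for biterals $b_1,b_2$, and then bootstrap the general variable-free case from it using Lemma~\ref{lemmaaen}. For the biteral case I would argue by induction on the structure of $b_2$, exactly as indicated in the sketch of Theorem~\ref{bsigcomp}. I will use freely that distinct bit strings are mapped to syntactically distinct biterals, so that $\mathfrak{B}\models b_1\neq b_2$ just says that $b_1$ and $b_2$ are different biterals.

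For the base case $b_2\equiv e$, if $\mathfrak{B}\models b_1\neq e$ then $b_1$ is nonempty, hence of the form $c\circ 0$ or $c\circ 1$ for some biteral $c$; Lemma~\ref{gammelbtre} gives $B\vdash c0\neq e$ and $B\vdash c1\neq e$, so $B\vdash b_1\neq e$ in either case. For the inductive step I split on the last bit of $b_2$, treating $b_2\equiv c_2\circ 0$ in detail (the case $b_2\equiv c_2\circ 1$ being symmetric), and then analyse $b_1$. If $b_1\equiv e$, Lemma~\ref{gammelbtre} gives $B\vdash c_2 0\neq e$ directly. If $b_1\equiv c_1\circ 1$, then axiom $B_4$ gives $B\vdash c_1 1\neq c_2 0$. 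If $b_1\equiv c_1\circ 0$, then $b_1$ and $b_2$ are distinct biterals ending in the same bit, so $c_1$ and $c_2$ are distinct biterals; the induction hypothesis yields $B\vdash c_1\neq c_2$, and axiom $B_3$ propagates this to $B\vdash c_1 0\neq c_2 0$. In every subcase we obtain $B\vdash b_1\neq b_2$.

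For the variable-free case, suppose $\mathfrak{B}\models \neg t_1=t_2$. By Lemma~\ref{lemmaaen} there are biterals $b_1,b_2$ with $B\vdash t_1=b_1$ and $B\vdash t_2=b_2$. Since $\mathfrak{B}\models B$, these equalities hold in $\mathfrak{B}$, and from $\mathfrak{B}\models t_1\neq t_2$ we deduce $\mathfrak{B}\models b_1\neq b_2$, i.e.\ $b_1$ and $b_2$ are distinct biterals. The biteral case now gives $B\vdash b_1\neq b_2$, and combining with the two derived equalities yields $B\vdash \neg t_1=t_2$.

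The statement is routine, so the only real work---the \emph{main obstacle}, such as it is---lies in making the inductive step watertight: checking that the last-bit case distinction for biterals is exhaustive, that the two mismatched-last-bit situations are both covered by $B_4$ (in either order, using symmetry of $\neq$), and that the matching-last-bit situation genuinely forces the truncations $c_1,c_2$ to differ as strings so that the induction hypothesis applies. The role of Lemma~\ref{gammelbtre} is exactly to dispatch the degenerate subcases in which one biteral is $e$, which $B_3$ and $B_4$ alone do not reach.
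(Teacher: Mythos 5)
Your proposal is correct and follows essentially the same route as the paper's own proof: induction on the structure of $b_2$, with a case split on the form of $b_1$ in the inductive step, using Lemma~\ref{gammelbtre} for the empty-string subcases, $B_4$ for mismatched last bits, $B_3$ with the induction hypothesis for matching last bits, and Lemma~\ref{lemmaaen} to reduce the variable-free case to the biteral case. The only cosmetic difference is that the paper phrases the inner case split as an ``induction on $b_1$,'' whereas you correctly present it as a plain case distinction relying on the outer induction hypothesis.
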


\begin{proof}
Let $b_{1}$ and $b_{2}$ be biterals such that $\mathfrak{B} \models \neg b_{1} = b_{2}$. 
We proceed by induction on the structure of $b_{2}$ to show that $B\vdash \neg b_{1} = b_{2}$.

If $b_{2} \equiv e  $, then $b_{1}\equiv b \circ 0$ or $b_{1}\equiv b \circ 1$ for some biteral $b$. In either case, by 
Lemma \ref{gammelbtre}, we have $B\vdash (\neg b \circ 0 = e) \wedge (\neg b \circ 1 = e) $.

Suppose $b_{2} \equiv t \circ 0$. Furthermore,  suppose by induction hypothesis that 
\begin{align*} 
\mathfrak{B} \models \neg b=t \; \Rightarrow \;
B \vdash \neg b=t \tag{\mbox{IH}}
\end{align*} 
for any biteral $b$. We proceed by induction on $b_{1}$.  
If $b_{1} \equiv e$, we have  $B\vdash \neg e= t \circ 0$ by Lemma \ref{gammelbtre}. 
If $b_{1} \equiv b \circ 0$, then $\mathfrak{B} \models \neg b= t$. 
By (IH), we have $B\vdash \neg b= t$. By $B_3$, we have $B\vdash \neg b \circ 0 = t \circ 0$.
If $b_{1} \equiv b \circ 1$, we have   $B\vdash \neg b \circ 1 = t \circ 0$ by $B_4$.

This case when $b_{2} \equiv t \circ 1$ is symmetric to the  case when $b_{2} \equiv t \circ 0$.

This proves that
\begin{align*}
\mathfrak{B} \models \neg b_{1}=b_{2} \ \Rightarrow \ B\vdash \neg b_{1}=b_{2} \; . \tag{*}
\end{align*}

Now, suppose $t_{1}$ and $t_{2}$ are variable-free $\mathcal{L}_{BT}$-terms such that $\mathfrak{B} \models \neg t_{1} = t_{2}$.  
By Lemma \ref{lemmaaen}, there exist biterals $b_{1}$ and $b_{2}$ such that 
$B\vdash t_{1}=b_{1} \wedge t_{2} = b_{2}$. As  $\mathfrak{B} \models B$, we have 
$\mathfrak{B} \models t_{1}=b_{1} \wedge t_{2} = b_{2}$. It follows that  $\mathfrak{B} \models \neg b_{1} = b_{2}$. 
By (*), we have  $B\vdash \neg b_{1} = b_{2}$, and thus we also have  $B\vdash \neg t_{1} = t_{2}$.
\qed
\end{proof}

\begin{lemma} \label{lemmaatre}
We have
\[\mathfrak{B} \models b_{1} \sqsubseteq b_{2}  \ \Rightarrow \  B\vdash b_{1} \sqsubseteq b_{2}  \]
for any biterals $b_{1}$ and $b_{2}$. 
Furthermore, we have
\[\mathfrak{B} \models  t_{1} \sqsubseteq t_{2}  \ \Rightarrow \  B\vdash t_{1} \sqsubseteq t_{2}  \]
for any variable-free $\mathcal{L}_{BT}$-terms $t_{1}$ and $t_{2}$. 
\end{lemma}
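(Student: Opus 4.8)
The plan is to mirror the structure of Lemma \ref{lemmaato}: first establish the statement for biterals $b_1, b_2$ by induction on the structure of $b_2$, and then lift it to arbitrary variable-free terms $t_1, t_2$ using Lemma \ref{lemmaaen}. For the biteral case, suppose $\mathfrak{B} \models b_1 \sqsubseteq b_2$. I proceed by induction on $b_2$. In the base case $b_2 \equiv e$: the substring relation forces $b_1 \equiv e$ (the only substring of the empty string is itself), so by axiom $B_5$ we get $B \vdash e \sqsubseteq e$. For the inductive step I split on the last bit of $b_2$, matching the four axioms $B_8$–$B_{11}$. The delicate point is that these axioms are all stated with a leading bit: they govern $x \sqsubseteq 0 \circ y \circ 0$, $x \sqsubseteq 0 \circ y \circ 1$, $x \sqsubseteq 1 \circ y \circ 0$ and $x \sqsubseteq 1 \circ y \circ 1$. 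So I should first peel off the \emph{first} bit of $b_2$ as well, writing $b_2 \equiv c \circ y \circ d$ where $c, d \in \{0,1\}$ and $y$ is a (possibly empty) biteral, and then invoke the appropriate axiom among $B_8$–$B_{11}$; the short biterals of length $\leq 1$ are handled separately by $B_5, B_6, B_7$.

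The heart of the argument is the inductive step. Fix the last bit $d$ of $b_2$ and the appropriate axiom, say $B_8$ in the case $b_2 \equiv 0 \circ y \circ 0$, which gives
\begin{align*}
B \vdash b_1 \sqsubseteq 0 y 0 \;\leftrightarrow\; \big( b_1 = 0 y 0 \;\vee\; b_1 \sqsubseteq 0 y \;\vee\; b_1 \sqsubseteq y 0 \big)\; .
\end{align*}
Since $\mathfrak{B} \models b_1 \sqsubseteq 0 y 0$, by correctness of $\mathfrak{B}$ exactly one of the three disjuncts holds in $\mathfrak{B}$. If $\mathfrak{B} \models b_1 = 0 y 0$, then $B \vdash b_1 = 0 y 0$ by Lemma \ref{lemmaaen}, hence $B \vdash b_1 \sqsubseteq 0 y 0$ by $B_8$. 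Otherwise $\mathfrak{B} \models b_1 \sqsubseteq 0 y$ or $\mathfrak{B} \models b_1 \sqsubseteq y 0$; in either case the right-hand biteral ($0y$ or $y0$) is strictly shorter than $b_2 = 0y0$, so the induction hypothesis applies and yields $B \vdash b_1 \sqsubseteq 0 y$ (respectively $B \vdash b_1 \sqsubseteq y 0$), and $B_8$ again delivers $B \vdash b_1 \sqsubseteq 0 y 0$. The other three cases are fully symmetric via $B_9$, $B_{10}$, $B_{11}$.

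The lift to arbitrary variable-free terms is then routine, exactly as in Lemma \ref{lemmaato}: given $\mathfrak{B} \models t_1 \sqsubseteq t_2$, use Lemma \ref{lemmaaen} to obtain biterals with $B \vdash t_1 = b_1$ and $B \vdash t_2 = b_2$; soundness of $\mathfrak{B}$ gives $\mathfrak{B} \models b_1 \sqsubseteq b_2$; the biteral case gives $B \vdash b_1 \sqsubseteq b_2$; and substituting back yields $B \vdash t_1 \sqsubseteq t_2$. The main obstacle I anticipate is purely bookkeeping rather than conceptual: the axioms $B_8$–$B_{11}$ require a concrete first bit, so the induction must correctly reduce to biterals that genuinely have the form $c \circ y \circ d$, which means treating the very short biterals (lengths $0$ and $1$) as explicit base cases handled by $B_5$–$B_7$, and making sure the measure driving the induction (the length of $b_2$) strictly decreases in each recursive appeal.
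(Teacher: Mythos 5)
Your proof is correct and takes essentially the same route as the paper's: induction on (the length of) $b_{2}$, with $B_5$--$B_7$ disposing of lengths $\leq 1$, the inductive step using $B_8$--$B_{11}$ together with Lemma \ref{lemmaaen} and the induction hypothesis applied to the two shorter strings ($b_2$ minus its last bit and $b_2$ minus its first bit, the first-bit peeling being exactly why the paper invokes $B_1$ alongside $B_8$), and the same lifting to arbitrary variable-free terms at the end. One trivial slip: ``exactly one of the three disjuncts holds'' should be ``at least one'' (a short $b_1$ can be a substring of both $0y$ and $y0$), but your argument only uses the existence of a true disjunct, so nothing breaks.
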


\begin{proof}
We prove this lemma by induction on the structure of $b_{2}$.

If $b_{2}\equiv e$ and $\mathfrak{B} \models b_{1} \sqsubseteq b_{2} $, then $b_{1}$ is $e$.
By $B_5$, we have $B \vdash e\sqsubseteq e$. 

If $b_{2}\equiv e\circ 0$ and $\mathfrak{B} \models b_{1} \sqsubseteq b_{2} $, then $b_{1}$ is $e$ or $e\circ 0$. In either case, by Lemma
\ref{lemmaaen} and  $B_6$, we have $B \vdash b_{1} \sqsubseteq b_{2}$. 

If $b_{2}\equiv e\circ 1$ and $\mathfrak{B} \models b_{1} \sqsubseteq b_{2} $, then $b_{1}$ is $e$ or $e\circ 1$. In either case, by Lemma 
\ref{lemmaaen} and $B_7$, we have $B \vdash b_{1} \sqsubseteq b_{2}$. 

Suppose $b_{2}\equiv e\circ 0\circ t\circ 0$.
Furthermore, suppose by induction hypothesis that we for any biteral $s$ have
\begin{itemize}
\item $\mathfrak{B} \models s\sqsubseteq e\circ 0\circ t \ \Rightarrow \ B \vdash s\sqsubseteq e\circ 0\circ t$ 
\item $\mathfrak{B} \models s\sqsubseteq t\circ 0 \ \Rightarrow \ B \vdash s\sqsubseteq t\circ 0$. 
\end{itemize}
Let $\mathfrak{B} \models b_{1} \sqsubseteq b_{2}$. Then we have 
$$\mathfrak{B} \models b_{1}= e\circ 0\circ t\circ 0 \;  \vee \; b_{1} \sqsubseteq e\circ 0\circ t \; \vee \;
 b_{1} \sqsubseteq t\circ 0 \; .$$ 
By our induction hypothesis and Lemma \ref{lemmaaen}, 
we have 
$$B \vdash b_{1}= e\circ 0\circ t\circ 0 \; \vee \; b_{1} \sqsubseteq e\circ 0\circ t \; \vee \;  b_{1} \sqsubseteq t\circ 0 \; .$$
By $B_1$ and $B_8$, we have $B \vdash b_{1} \sqsubseteq e\circ 0\circ t\circ 0$. 

The case when  $b_{2}\equiv e\circ 0\circ t\circ 1$, the case when $b_{2}\equiv e\circ 1\circ t\circ 0$ and 
the case when $b_{2}\equiv e\circ 1\circ t\circ 1$ are 
handled similarly using $B_9$, $B_{10}$ and $B_{11}$, respectively, in place of $B_8$.  
This proves that we have
\begin{align*}
\mathfrak{B} \models b_{1} \sqsubseteq b_{2} \ \Rightarrow \ B \vdash b_{1} \sqsubseteq b_{2} \; . \tag{*}
\end{align*}
for any biterals $b_{1}, b_{2}$

Suppose $t_{1}$ and $t_{2}$ are variable-free $\mathcal{L}_{BT}$-terms such that $\mathfrak{B} \models t_{1} \sqsubseteq t_{2}$.  By Lemma 
\ref{lemmaaen}, there exists biteral $b_{1}$ and $b_{2}$ such that 
$B\vdash t_{1}=b_{1} \wedge t_{2} = b_{2}$. Since $\mathfrak{B} \models B$, we also have 
$\mathfrak{B} \models t_{1}=b_{1} \wedge t_{2} = b_{2}$. Hence, $\mathfrak{B} \models b_{1} \sqsubseteq b_{2}$. 
By (*), we have $B\vdash b_{1} \sqsubseteq b_{2}$, and thus we also have $B\vdash t_{1} \sqsubseteq t_{2}$.
\qed
\end{proof}

\begin{lemma} \label{lemmaafire}
Let $\phi(x)$ be an $\mathcal{L}_{BT}$ formula such that 
\[
\mathfrak{B} \models \phi(b) \ \Rightarrow \ B \vdash \phi(b)  \]
for any biteral $b$.
Then
\[
\mathfrak{B} \models (\forall x  \sqsubseteq b)\phi(x)  \ \Rightarrow \ 
B \vdash (\forall x  \sqsubseteq b)\phi(x)  \]
 for any biteral $b$.
\end{lemma}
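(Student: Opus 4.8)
The plan is to prove the statement by induction on the length $|b|$ of the bit string underlying the biteral $b$, using the axioms $B_5$--$B_{11}$ to unfold the substring relation. Suppose $\mathfrak{B} \models (\forall x \sqsubseteq b)\phi(x)$; since $b$ denotes a fixed finite bit string, this says precisely that $\mathfrak{B} \models \phi(c)$ for every biteral $c$ whose underlying string is a substring of $b$. The hypothesis on $\phi$ then gives $B \vdash \phi(c)$ for each such $c$, and the task is to assemble these finitely many derivations into a single derivation of $(\forall x \sqsubseteq b)\phi(x)$.

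For the base cases, if $b \equiv e$ then by $B_5$ we reason inside $B$: any $x$ with $x \sqsubseteq e$ satisfies $x = e$, and since $B \vdash \phi(e)$ we obtain $\phi(x)$, hence $B \vdash (\forall x \sqsubseteq e)\phi(x)$. The cases $b \equiv 0$ and $b \equiv 1$ are handled identically using $B_6$ and $B_7$, which enumerate the two substrings of $0$ and of $1$.

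For the inductive step, when $|b| \geq 2$ I would write $b$ in the form $a \circ t \circ a'$, where $a$ is its first bit, $a'$ its last bit, and $t$ the biteral for the middle. The substrings of $a t a'$ are exactly $a t a'$ itself together with the substrings of $a t$ and the substrings of $t a'$; this combinatorial fact is precisely what the appropriate axiom among $B_8$--$B_{11}$ encodes, namely $B \vdash \forall x[x \sqsubseteq a t a' \leftrightarrow (x = a t a' \vee x \sqsubseteq a t \vee x \sqsubseteq t a')]$. Since $\mathrm{Sub}(a t)$ and $\mathrm{Sub}(t a')$ are contained in $\mathrm{Sub}(b)$, the assumption $\mathfrak{B} \models (\forall x \sqsubseteq b)\phi(x)$ yields $\mathfrak{B} \models (\forall x \sqsubseteq a t)\phi(x)$ and $\mathfrak{B} \models (\forall x \sqsubseteq t a')\phi(x)$; as $|a t| = |t a'| = |b| - 1$, the induction hypothesis gives $B \vdash (\forall x \sqsubseteq a t)\phi(x)$ and $B \vdash (\forall x \sqsubseteq t a')\phi(x)$. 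Moreover $b \sqsubseteq b$ holds in $\mathfrak{B}$, so $\mathfrak{B} \models \phi(b)$ and hence $B \vdash \phi(b)$ by the hypothesis on $\phi$. Reasoning in $B$, take any $x$ with $x \sqsubseteq b$; by the axiom we split into the three cases $x = a t a'$, $x \sqsubseteq a t$, and $x \sqsubseteq t a'$, and in each case the corresponding derivation (either $B \vdash \phi(b)$ together with $x = b$, or one of the two induction hypotheses) yields $\phi(x)$. This gives $B \vdash (\forall x \sqsubseteq b)\phi(x)$.

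I would choose induction on $|b|$ rather than naive structural induction on the biteral, because the axioms reduce $a t a'$ simultaneously to $a t$ (dropping the last bit) and to $t a'$ (dropping the first bit), and only a length measure makes both of these smaller at once. The main technical obstacle is the bookkeeping needed to present $b$, which is syntactically the left-associated term $(\ldots((e \circ a_1) \circ a_2)\ldots)$, in the shape $a \circ t \circ a'$ demanded by axioms $B_8$--$B_{11}$: this requires reassociating with $B_2$ and absorbing the leading $e$ with $B_1$, exactly the kind of manipulation already carried out in Lemma \ref{lemmaaen}. Once that normalization is available, the remainder is a routine case split and appeal to the induction hypothesis.
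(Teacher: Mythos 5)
Your proof is correct and follows essentially the same route as the paper's: the same base cases via $B_5$--$B_7$, and the same inductive step that peels off the first and last bits of $b$ and combines the induction hypotheses with $\phi(b)$ via the relevant axiom among $B_8$--$B_{11}$. The paper phrases the induction structurally (cases $e$, $e\circ 0$, $e\circ 1$, $e\circ 0\circ t\circ 0$, \ldots) rather than on $|b|$, but this is the same measure in substance, and your remark about reassociating the left-associated biteral using $B_1$ and $B_2$ merely makes explicit a bookkeeping step the paper leaves implicit.
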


\begin{proof}
We proceed by induction on $b$. 

Let $b\equiv e$. By $B_5$, we have
$$
\mathfrak{B} \models (\forall x  \sqsubseteq e)\phi(x) \ \Leftrightarrow  \ 
\mathfrak{B} \models \phi(e) \ 
\Rightarrow   \ B \vdash \phi(e)  \ \Rightarrow \ B \vdash (\forall x  \sqsubseteq e)\phi(x) \;  .
$$

Let $b \equiv e\circ 0$. By $B_1$ and $B_6$, we have
\begin{multline*}
\mathfrak{B} \models (\forall x  \sqsubseteq e\circ 0)\phi(x) \  \Leftrightarrow \ 
\mathfrak{B} \models \phi(e) \wedge \phi(0) \ \Rightarrow \ 
B \vdash \phi(e) \wedge \phi(0)  \\
\  \Rightarrow \ B \vdash (\forall x  \sqsubseteq e\circ 0)\phi(x) .
\end{multline*}

Let $b \equiv e\circ 1$. This case is symmetric to the case $b \equiv e\circ 0$.
Use  $B_7$ in place of $B_6$.

Let $b\equiv e\circ 0\circ t\circ 0$. Suppose by induction hypothesis (IH) that 
\begin{itemize}
\item  $\mathfrak{B} \models (\forall x  \sqsubseteq e\circ 0\circ t)\phi(x) 
\ \Rightarrow \ B \vdash (\forall x  \sqsubseteq e\circ 0\circ t)\phi(x)$
\item  $\mathfrak{B} \models (\forall x  \sqsubseteq t\circ 0)\phi(x) 
\ \Rightarrow \  B \vdash (\forall x  \sqsubseteq t\circ 0 )\phi(x)$.
\end{itemize}
Then, by the assumption on $\phi$ given in our lemma, we have
\renewcommand{\arraystretch}{1.4}
$$\begin{array}{cl}
 \mathfrak{B} \models (\forall x  \sqsubseteq e\circ 0\circ t \circ 0 )\phi(x) &  \\
  \Updownarrow & \\
 \mathfrak{B} \models (\forall x  \sqsubseteq e\circ 0\circ t)\phi(x) \; \wedge \; 
(\forall x  \sqsubseteq  t\circ 0)\phi(x) \; \wedge \; \phi(e\circ 0\circ t\circ 0) \;\;\;\;\; & \\
 \Downarrow & \mbox{\small (IH)}  \\
  B \vdash (\forall x  \sqsubseteq e\circ 0\circ t)\phi(x) \; \wedge \;
(\forall x  \sqsubseteq  t\circ 0)\phi(x) \; \wedge \; \phi(e\circ 0\circ t\circ 0)  & \\
 \Downarrow & \mbox{\small ($B_8$)}  \\
  B \vdash (\forall x  \sqsubseteq e\circ 0\circ t \circ 0) \phi(x) \;  . & 
\end{array}$$
\renewcommand{\arraystretch}{1.0}

The case when  $b\equiv e\circ 0\circ t\circ 1$, the case when $b\equiv e\circ 1\circ t\circ 0$ and 
the case when $b\equiv e\circ 1\circ t\circ 1$ are handled similarly using $B_9$, $B_{10}$ and $B_{11}$, respectively, in place of $B_8$. 
\qed
\end{proof}

\begin{lemma} \label{lemmaafem}
We have 
\[\mathfrak{B} \models \neg b_{1} \sqsubseteq b_{2}  \ \Rightarrow \  B\vdash \neg b_{1} \sqsubseteq b_{2}  \]
for any biterals $b_{1}, b_{2}$. 
Furthermore, we have
\[\mathfrak{B} \models  \neg t_{1} \sqsubseteq t_{2}  \ \Rightarrow \  B\vdash \neg t_{1} \sqsubseteq t_{2}  \]
for any variable-free $\mathcal{L}_{BT}$-terms $t_{1}, t_{2}$.
\end{lemma}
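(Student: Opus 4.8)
The plan is to mirror the proof of Lemma \ref{lemmaatre}, exploiting that the relevant axioms $B_5$--$B_{11}$ are stated as \emph{biconditionals}: each characterises $x \sqsubseteq c$ as a disjunction of an equality and shorter substring claims, so the negation $x \not\sqsubseteq c$ becomes, provably in $B$, the corresponding conjunction of an inequality and shorter non-substring claims. First I would establish the biteral case, that is $\mathfrak{B} \models \neg b_1 \sqsubseteq b_2 \Rightarrow B \vdash \neg b_1 \sqsubseteq b_2$, by induction on the structure (equivalently, the length) of the biteral $b_2$, and then lift it to arbitrary variable-free terms via Lemma \ref{lemmaaen}.

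For the base cases, suppose $b_2$ is $e$, $e\circ 0$ or $e\circ 1$. If $b_2 \equiv e$ and $\mathfrak{B} \models \neg b_1 \sqsubseteq e$, then $b_1$ is not $e$, so Lemma \ref{lemmaato} gives $B \vdash \neg b_1 = e$, and $B_5$ yields $B \vdash \neg b_1 \sqsubseteq e$. The cases $b_2 \equiv e\circ 0$ and $b_2 \equiv e \circ 1$ are analogous: from $\mathfrak{B} \models \neg b_1 \sqsubseteq 0$ one reads off that $b_1$ is neither $e$ nor $0$, derives both inequalities by Lemma \ref{lemmaato}, and concludes $B \vdash \neg b_1 \sqsubseteq 0$ using $B_6$ (and symmetrically $B_7$ when $b_2 \equiv e\circ 1$).

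For the inductive step, every biteral of length at least two can be written (provably in $B$, via $B_1$ and $B_2$) in one of the four forms $0\circ t\circ 0$, $0\circ t\circ 1$, $1\circ t\circ 0$, $1\circ t\circ 1$, where $t$ is a shorter biteral. Consider $b_2 \equiv 0 \circ t \circ 0$. By the biconditional $B_8$, the assumption $\mathfrak{B} \models \neg b_1 \sqsubseteq b_2$ is equivalent to $\mathfrak{B} \models \neg b_1 = 0t0 \,\wedge\, \neg b_1 \sqsubseteq 0t \,\wedge\, \neg b_1 \sqsubseteq t0$. The first conjunct yields $B \vdash \neg b_1 = 0t0$ by Lemma \ref{lemmaato}; the remaining two conjuncts yield $B \vdash \neg b_1 \sqsubseteq 0t$ and $B \vdash \neg b_1 \sqsubseteq t0$ by the induction hypothesis, since $0t$ and $t0$ are shorter biterals. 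Combining these three derivations through $B_8$ gives $B \vdash \neg b_1 \sqsubseteq b_2$. The three remaining forms are handled identically, using $B_9$, $B_{10}$, $B_{11}$ in place of $B_8$.

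Finally, I would lift the result from biterals to arbitrary variable-free terms exactly as in the previous lemmas: given $\mathfrak{B} \models \neg t_1 \sqsubseteq t_2$, Lemma \ref{lemmaaen} supplies biterals $b_1, b_2$ with $B \vdash t_1 = b_1$ and $B \vdash t_2 = b_2$; since $\mathfrak{B} \models B$ we obtain $\mathfrak{B} \models \neg b_1 \sqsubseteq b_2$, the biteral case gives $B \vdash \neg b_1 \sqsubseteq b_2$, and rewriting with the two provable equalities yields $B \vdash \neg t_1 \sqsubseteq t_2$. The only real work is the bookkeeping in the inductive step, namely checking that negating the biconditional $B_8$ distributes correctly over the three disjuncts so that each can be discharged by either Lemma \ref{lemmaato} or the induction hypothesis; once the biconditional form of the axioms is exploited, I expect no genuine obstacle.
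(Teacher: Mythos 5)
Your proposal is correct and follows essentially the same route as the paper's own proof: induction on the biteral $b_2$, with the base cases discharged by Lemma~\ref{lemmaato} together with $B_5$--$B_7$, the inductive step by Lemma~\ref{lemmaato}, the induction hypothesis and the biconditionals $B_8$--$B_{11}$, and the lift to arbitrary variable-free terms via Lemma~\ref{lemmaaen}. The only cosmetic difference is that you write the decomposed biterals as $0\circ t\circ 0$ etc.\ while the paper keeps the leading $e$ (writing $e\circ 0\circ t\circ 0$), which changes nothing of substance.
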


\begin{proof}
We proceed by induction on $b_{2}$. 

If $b_{2}\equiv e$ and $\mathfrak{B} \models \neg b_{1} \sqsubseteq e$, 
then $\mathfrak{B} \models \neg b_{1} = e$. By Lemma \ref{lemmaato}, we have $B \vdash \neg b_{1} = e$. 
By $B_5$, we have $B \vdash \neg b_{1} \sqsubseteq e$.

If $b_{2}\equiv e\circ 0$ and $\mathfrak{B} \models \neg b_{1} \sqsubseteq e\circ 0$, 
then $\mathfrak{B} \models \neg b_{1} = e \ \wedge \ \neg b_{1} = 0$. By Lemma \ref{lemmaato}, we have 
$B \vdash \neg b_{1} = e \ \wedge \ \neg b_{1} = 0$. 
By $B_6$, we have $B \vdash \neg b_{1} \sqsubseteq e\circ 0$.  

If $b_{2}\equiv e\circ 1$ and $\mathfrak{B} \models \neg b_{1} \sqsubseteq e\circ 1$, 
then $\mathfrak{B} \models \neg b_{1} = e \ \wedge \ \neg b_{1} = 1$. By Lemma \ref{lemmaato}, we have
$B \vdash \neg b_{1} = e \ \wedge \ \neg b_{1} = 1$. 
By $B_7$, we have $B \vdash \neg b_{1} \sqsubseteq e\circ 1$.

Let $b_{2}\equiv e\circ 0\circ t\circ 0$.
Suppose by induction hypothesis that we have
\begin{itemize}
\item $\mathfrak{B} \models \neg s\sqsubseteq e\circ 0\circ t \ \Rightarrow \ B \vdash \neg s\sqsubseteq e\circ 0\circ t$
\item $\mathfrak{B} \models \neg s\sqsubseteq t\circ 0 \; \Rightarrow \ B \vdash \neg s\sqsubseteq t\circ 0$ 
\end{itemize}
 for any biteral $s$.
Let $\mathfrak{B} \models \neg b_{1} \sqsubseteq b_{2}$. 
Then 
$$\mathfrak{B} \models \neg b_{1} \sqsubseteq e\circ 0\circ t \ \wedge \ \neg b_{1} \sqsubseteq t\circ 0 \ \wedge \ 
\neg b_{1} = e\circ 0\circ t \circ 0 \; . $$ 
By our induction hypothesis and Lemma \ref{lemmaato}, 
we have 
$$B\vdash \neg b_{1} \sqsubseteq e\circ 0\circ t \ \wedge \ \neg b_{1} \sqsubseteq t\circ 0 \ \wedge \ 
\neg b_{1} = e\circ 0\circ t \circ 0 \; . $$ 
By  $B_8$, we have $B \vdash \neg b_{1} \sqsubseteq e\circ 0\circ t\circ 0$.

The case when  $b \equiv e\circ 0\circ t\circ 1$, the case when $b \equiv e\circ 1\circ t\circ 0$ and 
the case when $b \equiv e\circ 1\circ t\circ 1$ 
are handled similarly using $B_9$, $B_{10}$ and $B_{11}$, respectively, in place of $B_8$.
Thus, we conclude that we have
\begin{align*}
\mathfrak{B} \models \neg b_{1} \sqsubseteq b_{2} \ \Rightarrow \ B\vdash \neg b_{1} \sqsubseteq b_{2} \; . \tag{*}
\end{align*}
for any biterals $b_{1}, b_{2}$.

Let $t_{1}$ and $t_{2}$ be variable-free $\mathcal{L}_{BT}$-terms such that 
$\mathfrak{B} \models \neg t_{1} \sqsubseteq t_{2}$.  By Lemma \ref{lemmaaen}, we have biterals $b_{1}$ and $b_{2}$ such that 
$B\vdash t_{1}=b_{1} \wedge t_{2} = b_{2}$. 
Since $\mathfrak{B} \models B$, we also have
 $\mathfrak{B} \models t_{1}=b_{1} \wedge t_{2} = b_{2}$. Hence $\mathfrak{B} \models \neg b_{1} \sqsubseteq b_{2}$. 
By (*), we have $B\vdash \neg b_{1} \sqsubseteq b_{2}$, and thus  $B\vdash \neg t_{1} \sqsubseteq t_{2}$.
\qed
\end{proof}

We are now prepared to prove Theorem \ref{bsigcomp}
We proceed by induction on the structure of the $\Sigma$-sentence $\phi$. 

If $\phi$ is an atomic formula or the negation of an atomic formula, then applications of Lemma \ref{lemmaaen}, Lemma \ref{lemmaato}, 
Lemma \ref{lemmaatre} or Lemma \ref{lemmaafem} give 
\[
\mathfrak{B} \models \phi \ \Rightarrow \ B \vdash \phi.
\]

Let $\phi \equiv \alpha \vee \beta $. 
Assume $\mathfrak{B} \models \alpha \vee \beta$. Then we have $\mathfrak{B} \models \alpha$  or  $\mathfrak{B} \models  \beta$.
We can w.l.o.g.\ assume that $\mathfrak{B} \models \alpha$. By our induction hypothesis, we have $B\vdash \alpha$.
Finally, as $\alpha \vee \beta$ follows logically from $\alpha$, we conclude that $B\vdash \alpha \vee \beta$. 

The case when  $\phi\equiv \alpha \wedge \beta$ is similar to the case when $\phi \equiv \alpha \vee \beta$.

Let $\phi \equiv (\exists x ) \alpha(x)$. The induction hypothesis yields
\[
\mathfrak{B} \models \alpha(t) \ \Rightarrow \ B \vdash \alpha(t)
\]
for any variable-free term $t$.
Now assume that $\mathfrak{B} \models (\exists x ) \alpha(x)$.
Then there exists a biteral $b$ such that  
$\mathfrak{B} \models  \alpha( b )$. By our induction hypothesis, we have
$B\vdash  \alpha( b )$. As $\vdash (\exists x ) \alpha(x)$ follows logically from 
$\alpha( b )$, we have $B\vdash (\exists x ) \alpha(x)$.

Let $\phi \equiv (\exists x  \sqsubseteq t)\alpha(x)$ where $t$ is a variable-free term.
The induction hypothesis yields 
\[
\mathfrak{B} \models \alpha(t) \ \Rightarrow \ B \vdash \alpha(t)
\]
for any variable-free term $t$. Assume $\mathfrak{B} \models (\exists x  \sqsubseteq t)\alpha(x)$
Then there exists  biteral $b$ such that  $\mathfrak{B} \models  b  \sqsubseteq t$ and
$\mathfrak{B} \models  \alpha( b )$. By Lemma \ref{lemmaatre}, we have $B\vdash  b  \sqsubseteq t$.
By our induction hypothesis, we have $B\vdash  \alpha( b )$. It follows that $B\vdash (\exists x  \sqsubseteq t)\alpha(x)$.

Let $\phi\equiv (\forall x  \sqsubseteq t) \alpha(x)$ where $t$ is a variable-free term.
The induction hypothesis yields  
\[
\mathfrak{B} \models \alpha(t) \ \Rightarrow \ B \vdash \alpha(t)
\]
for any variable-free term $t$. Assume $\mathfrak{B} \models (\forall x  \sqsubseteq t) \alpha(x)$.
By Lemma \ref{lemmaaen}, there exists a biteral $b$ such that $B \vdash t=b$.
Obviously,  $\mathfrak{B} \models (\forall x  \sqsubseteq b) \alpha(x)$.
By Lemma \ref{lemmaafire} and our induction hypothesis, we have
$B\vdash (\forall x  \sqsubseteq b ) \alpha(x)$. Finally, as $B \vdash t=b$, we have 
$B\vdash (\forall x  \sqsubseteq t) \alpha(x)$.

This completes the proof of Theorem \ref{bsigcomp}.

\section{Proof of Theorem \ref{dsigcomp}: $\Sigma$-Completeness of $D$}

\label{dsigcompproof}

We now proceed to prove that $D$ is  $\Sigma$-complete.
Recall that the first four axioms of $D$ are the same as the first four axioms of $B$.

\begin{lemma} \label{lemmaben}
For any variable-free $\mathcal{L}_{BT}$-term $t$ there exists a biteral $b$ such that 
$D\vdash t=b$. Furthermore, we have
\[\mathfrak{D} \models t_{1} = t_{2} \ \Rightarrow \  D\vdash t_{1} = t_{2}  \]
for any variable-free $\mathcal{L}_{BT}$-terms $t_{1}$ and $t_{2}$.
\end{lemma}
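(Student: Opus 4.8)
The plan is to mirror the proof of Lemma \ref{lemmaaen} essentially verbatim, replacing $B$ by $D$ and $\mathfrak{B}$ by $\mathfrak{D}$ throughout. The crucial observation is that the argument for Lemma \ref{lemmaaen} invokes only the axioms $B_1$ and $B_2$, and these are literally the first two axioms of $D$ as well, since the first four axioms of $D$ coincide with the first four axioms of $B$. Because $\mathcal{L}_{BT}$-terms do not mention the relation symbol $\sqsubseteq$, the disagreement between $\mathfrak{B}$ and $\mathfrak{D}$ on the interpretation of that symbol plays no role whatsoever in reducing a term to a biteral.

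First I would establish the existence claim by induction on the structure of $t$. In the base cases $t \equiv e$, $t \equiv 0$ and $t \equiv 1$, I take $b$ to be $e$, $e \circ 0$ and $e \circ 1$ respectively, appealing to $D_1$ (which is $B_1$) in the latter two cases. In the inductive case $t \equiv t_{1} \circ t_{2}$, I use the induction hypothesis to obtain biterals $b_{1}, b_{2}$ with $D \vdash t_{1} = b_{1}$ and $D \vdash t_{2} = b_{2}$, and then reassociate and absorb the internal occurrence of $e$ in $b_{1} \circ b_{2}$ using $D_2$ (which is $B_2$) and $D_1$ to reach a genuine biteral $b$. Exactly as in the proof of Lemma \ref{lemmaaen}, this manipulation can be phrased semantically in an arbitrary model of $D$ and then transferred to provability by the Completeness Theorem for first-order logic.

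For the second claim, suppose $\mathfrak{D} \models t_{1} = t_{2}$. By the first part there are biterals $b_{1}, b_{2}$ with $D \vdash t_{1} = b_{1}$ and $D \vdash t_{2} = b_{2}$. Since $\mathfrak{D}$ is a model of $D$, these yield $\mathfrak{D} \models t_{1} = b_{1}$ and $\mathfrak{D} \models t_{2} = b_{2}$, whence $\mathfrak{D} \models b_{1} = b_{2}$. The universe of $\mathfrak{D}$ and the interpretations of $\circ, 0, 1, e$ are identical to those of $\mathfrak{B}$, so each bit string is still the value of a unique biteral; hence $b_{1}$ and $b_{2}$ are the same biteral, giving $D \vdash b_{1} = b_{2}$ and therefore $D \vdash t_{1} = t_{2}$.

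There is essentially no obstacle here: the lemma is the $\mathfrak{D}$-analogue of Lemma \ref{lemmaaen}, and the only point that requires a moment's care is confirming that no axiom beyond $D_1$ and $D_2$ is needed, so that the proof is insensitive to the change in the interpretation of $\sqsubseteq$. The single prerequisite I rely on is that $\mathfrak{D} \models D$, that is, soundness of the axioms of $D$ in their intended structure, which is immediate from the definitions of $\mathfrak{D}$ and of the prefix relation $\preceq$.
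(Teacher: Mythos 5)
Your proposal is correct and matches the paper exactly: the paper's own proof of Lemma \ref{lemmaben} simply states that it is identical to the proof of Lemma \ref{lemmaaen}, which is precisely the transfer you carry out. Your explicit checks---that only $D_1$ and $D_2$ (i.e., $B_1$ and $B_2$) are invoked, and that $\mathfrak{D}\models D$ is what the second part needs---are exactly the points that justify the paper's one-line proof.
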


\begin{proof}
This proof is identical to the proof of  Lemma \ref{lemmaaen}.
\qed
\end{proof}

\begin{lemma} \label{lemmabto}
For any biterals $b_{1}$ and $b_{2}$
\[\mathfrak{D} \models \neg b_{1} = b_{2}  \ \Rightarrow \  D\vdash \neg b_{1} = b_{2}  \; . \] 
Furthermore, for any variable-free $\mathcal{L}_{BT}$-terms $t_{1}$ and $t_{2}$
\[\mathfrak{D} \models  \neg t_{1} = t_{2}  \ \Rightarrow \  D\vdash  \neg t_{1} = t_{2}  \; . \]
\end{lemma}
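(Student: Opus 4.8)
The plan is to observe that the proof of Lemma \ref{lemmaato} transfers almost verbatim to the present setting, since that proof invokes only the first four axioms of $B$ (directly $B_3$ and $B_4$, and via Lemma \ref{gammelbtre} the axioms $B_1, B_2, B_4$) together with Lemma \ref{lemmaaen}. As the first four axioms of $D$ coincide with those of $B$, the first step is to record the $D$-analogue of Lemma \ref{gammelbtre}: since Lemma \ref{gammelbtre} is derived using only $B_1, B_2, B_4$, and since $D_1, D_2, D_4$ are literally $B_1, B_2, B_4$, we immediately obtain $D \vdash \forall x[\, x0 \neq e \wedge x1 \neq e \,]$.

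Next I would establish the biteral case $\mathfrak{D} \models \neg b_1 = b_2 \Rightarrow D \vdash \neg b_1 = b_2$ by induction on the structure of $b_2$, mirroring the argument for $B$. In the base case $b_2 \equiv e$, the assumption forces $b_1$ to end in a $0$ or a $1$, and the claim follows from the $D$-analogue of Lemma \ref{gammelbtre} just recorded. In the inductive step, say $b_2 \equiv t \circ 0$ (the case $b_2 \equiv t \circ 1$ being symmetric), I would run a secondary induction on $b_1$: if $b_1 \equiv e$ the claim is again the analogue of Lemma \ref{gammelbtre}; if $b_1 \equiv b \circ 0$ then $\mathfrak{D} \models \neg b = t$, so the inner induction hypothesis gives $D \vdash \neg b = t$ and $D_3$ yields $D \vdash \neg b\circ 0 = t \circ 0$; and if $b_1 \equiv b \circ 1$ then $D_4$ gives the conclusion directly.

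Finally I would lift the result to arbitrary variable-free terms using Lemma \ref{lemmaben} exactly as Lemma \ref{lemmaato} uses Lemma \ref{lemmaaen}: given $\mathfrak{D} \models \neg t_1 = t_2$, choose biterals $b_1, b_2$ with $D \vdash t_1 = b_1 \wedge t_2 = b_2$, note $\mathfrak{D} \models t_1 = b_1 \wedge t_2 = b_2$ since $\mathfrak{D} \models D$, infer $\mathfrak{D} \models \neg b_1 = b_2$, apply the biteral case to obtain $D \vdash \neg b_1 = b_2$, and conclude $D \vdash \neg t_1 = t_2$. I do not expect any genuine obstacle here: the whole point is that nothing in the argument for $B$ relied on the axioms $B_5$--$B_{11}$ that distinguish $B$ from $D$, so the only thing really to check is that the proof of Lemma \ref{lemmaato} never touches those axioms, which is indeed the case.
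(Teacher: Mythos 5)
Your proposal is correct and matches the paper's approach exactly: the paper's proof of this lemma is literally the remark that it is identical to the proof of Lemma \ref{lemmaato}, which works precisely because that proof only uses $B_1$--$B_4$ (directly and via Lemma \ref{gammelbtre}) together with Lemma \ref{lemmaaen}, and $D_1$--$D_4$ coincide with $B_1$--$B_4$. Your write-up simply makes this transfer explicit, including the $D$-analogue of Lemma \ref{gammelbtre} and the lifting to variable-free terms via Lemma \ref{lemmaben}.
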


\begin{proof}
This proof is identical to the proof of  Lemma  \ref{lemmaato}.
\qed
\end{proof}

\begin{lemma} \label{lemmabtre}
We have
\[\mathfrak{D} \models b_{1} \preceq b_{2}  \ \Rightarrow \  D\vdash b_{1} \preceq b_{2}  \]
for any biterals $b_{1}$ and $b_{2}$. 
Furthermore, we have
\[\mathfrak{D} \models  t_{1} \preceq t_{2}  \ \Rightarrow \  D\vdash t_{1} \preceq t_{2}  \] 
 for any variable-free $\mathcal{L}_{BT}$-terms $t_{1}$ and $t_{2}$.
\end{lemma}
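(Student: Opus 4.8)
The plan is to mirror the proof of Lemma~\ref{lemmaatre}, but to exploit the fact that the prefix axioms $D_5$, $D_6$, $D_7$ recurse in exactly the same way as the inductive definition of the biterals. As a result a single-level induction on $b_2$ suffices, in contrast with the substring case, where the two-sided decomposition in $B_8$--$B_{11}$ forced a two-level induction. I would first establish the biteral version,
$$\mathfrak{D} \models b_1 \preceq b_2 \ \Rightarrow \ D \vdash b_1 \preceq b_2,$$
by induction on the structure of $b_2$, and then lift it to arbitrary variable-free terms by means of Lemma~\ref{lemmaben}.

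For the base case $b_2 \equiv e$: if $\mathfrak{D} \models b_1 \preceq e$ then $b_1$ is the empty string, so $b_1 \equiv e$, and axiom $D_5$ gives $D \vdash e \preceq e$. For the inductive step, every nonempty biteral $b_2$ has the form $t \circ 0$ or $t \circ 1$ for some shorter biteral $t$. Suppose $b_2 \equiv t \circ 0$ (the case $t \circ 1$ is symmetric, with $D_7$ in place of $D_6$). If $\mathfrak{D} \models b_1 \preceq t \circ 0$, then $b_1$ is a prefix of the string $t \circ 0$, so either $b_1$ equals $t \circ 0$ or $\mathfrak{D} \models b_1 \preceq t$. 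In the first case Lemma~\ref{lemmaben} gives $D \vdash b_1 = t \circ 0$; in the second case the induction hypothesis, applied to the shorter biteral $t$, gives $D \vdash b_1 \preceq t$. In either case axiom $D_6$ lets me conclude $D \vdash b_1 \preceq t \circ 0$, that is, $D \vdash b_1 \preceq b_2$.

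To pass from biterals to arbitrary variable-free terms $t_1, t_2$ with $\mathfrak{D} \models t_1 \preceq t_2$, I would invoke Lemma~\ref{lemmaben} to obtain biterals $b_1, b_2$ with $D \vdash t_1 = b_1$ and $D \vdash t_2 = b_2$. Since $\mathfrak{D} \models D$, these equalities hold in $\mathfrak{D}$, whence $\mathfrak{D} \models b_1 \preceq b_2$; the biteral case then yields $D \vdash b_1 \preceq b_2$, and substituting back gives $D \vdash t_1 \preceq t_2$.

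I do not expect a genuine obstacle here; the argument is routine once the induction is set up correctly. The only point needing care is the observation that in the standard structure the prefix relation decomposes precisely along the recursion encoded in $D_6$ and $D_7$ --- a string is a prefix of $w\boldsymbol{0}$ iff it equals $w\boldsymbol{0}$ or is a prefix of $w$ --- so the inductive step closes immediately, without the extra case analysis that the substring relation demanded in Lemma~\ref{lemmaatre}.
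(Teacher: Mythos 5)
Your proof is correct and follows essentially the same route as the paper's: a single induction on $b_2$ using $D_5$, $D_6$, $D_7$ together with Lemma~\ref{lemmaben} in the inductive step, followed by the standard lifting from biterals to variable-free terms via Lemma~\ref{lemmaben}. The only cosmetic difference is that you split the inductive step into two explicit cases where the paper derives the disjunction $b_1 \preceq t \vee b_1 = b_2$ and applies $D_6$ to it directly.
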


\begin{proof}
We proceed by induction on $b_{2}$.

If $b_{2}\equiv e$ and $\mathfrak{D} \models b_{1} \preceq b_{2} $, then $b_{1}$ is $e$.
By $D_5$, we have $D \vdash e\preceq e$. 

Let $b_{2}\equiv t\circ 0$. 
Assume $\mathfrak{D} \models b_{1} \preceq b_{2}$. Then 
$\mathfrak{D} \models  b_{1} \preceq t \vee b_{1} = b_{2}$. 
By the induction hypothesis and Lemma \ref{lemmaben}, we have $D \vdash b_{1} \preceq t \vee b_{1} = b_{2}$.
By $D_6$, we have 
$D \vdash b_{1} \preceq b_{2}$.

The case when Let $b_{2}\equiv t\circ 1$ is similar to the case  $b_{2}\equiv t\circ 1$.  
Use $D_7$ in place of $D_6$.

Thus, we conclude that
\[
\mathfrak{D} \models b_{1} \preceq b_{2} \ \Rightarrow \ D \vdash b_{1} \preceq b_{2} \; .
\]
holds  for any biterals $b_{1}, b_{2}$. It is easy to see that also the second part 
of the theorem holds (see the proof Lemma \ref{lemmaatre}).
\qed
\end{proof}

\begin{lemma} \label{lemmabfire}
Let $\phi( x )$ be an $\mathcal{L}_{BT}$-formula such that we have
\[
\mathfrak{D} \models \phi(b) \ \Rightarrow \ D \vdash \phi(b)  \]
for any biteral $b$.
Then, we also have
\[
\mathfrak{D} \models (\forall  x  \preceq b)\phi( x )  \ \Rightarrow \ 
D \vdash (\forall  x  \preceq b)\phi( x )  \]
for any biteral $b$.
\end{lemma}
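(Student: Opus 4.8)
The plan is to mimic the proof of Lemma \ref{lemmaafire}, proceeding by induction on the structure of the biteral $b$. The argument is in fact simpler here, because the prefix axioms $D_6$ and $D_7$ peel off a single (last) bit and thus match the inductive definition of biterals directly, whereas the substring axioms of $B$ forced the more elaborate four-way case split seen in Lemma \ref{lemmaafire}. Throughout, I would keep in mind that $(\forall x \preceq b)\phi(x)$ is an abbreviation for $(\forall x)(x \preceq b \rightarrow \phi(x))$.

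First I would handle the base case $b \equiv e$. Using $D_5$, the formula $(\forall x \preceq e)\phi(x)$ is provably equivalent in $D$ to $\phi(e)$. Hence if $\mathfrak{D} \models (\forall x \preceq e)\phi(x)$, then in particular $\mathfrak{D} \models \phi(e)$, and the hypothesis on $\phi$ gives $D \vdash \phi(e)$; a second appeal to $D_5$ then yields $D \vdash (\forall x \preceq e)\phi(x)$.

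For the inductive step, consider $b \equiv t \circ 0$ with $t$ a biteral (the case $b \equiv t \circ 1$ is symmetric, using $D_7$ in place of $D_6$). By $D_6$ together with equality reasoning, the formula $(\forall x \preceq t \circ 0)\phi(x)$ is provably equivalent in $D$ to $\phi(t \circ 0) \wedge (\forall x \preceq t)\phi(x)$, since $x \preceq t \circ 0$ splits provably into $x = t \circ 0$ and $x \preceq t$. So if $\mathfrak{D} \models (\forall x \preceq t \circ 0)\phi(x)$, then $\mathfrak{D} \models \phi(t \circ 0)$ and $\mathfrak{D} \models (\forall x \preceq t)\phi(x)$. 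Applying the hypothesis on $\phi$ to the biteral $t \circ 0$ gives $D \vdash \phi(t \circ 0)$, while applying the induction hypothesis to the (shorter) biteral $t$ gives $D \vdash (\forall x \preceq t)\phi(x)$. Combining these and invoking $D_6$ once more yields $D \vdash (\forall x \preceq t \circ 0)\phi(x)$, which completes the induction.

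I do not expect any genuine obstacle. The only point requiring care is the repeated, and purely propositional, unfolding of the bounded quantifier through the biconditionals $D_5$, $D_6$ and $D_7$: this is what turns a bounded universal statement over $t \circ 0$ into the conjunction of a statement about the single endpoint $t \circ 0$ and a bounded universal statement over the proper prefix $t$. The endpoint is controlled by the hypothesis on $\phi$ and the prefix by the induction hypothesis, so the two clauses of the lemma's conclusion fall out immediately.
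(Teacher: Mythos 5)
Your proposal is correct and follows essentially the same route as the paper's proof: induction on the biteral $b$, with the base case handled via $D_5$ and the inductive step splitting $(\forall x \preceq t \circ 0)\phi(x)$ into $\phi(t\circ 0) \wedge (\forall x \preceq t)\phi(x)$ via $D_6$ (and $D_7$ for the symmetric case). Your observation that the prefix axioms make this simpler than the four-way case split of Lemma \ref{lemmaafire} is also accurate.
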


\begin{proof}
We prove the lemma by induction on $b$. 

Let $b\equiv e$. We have
\begin{multline*}
\mathfrak{D} \models (\forall  x  \preceq e)\phi( x ) \ \Leftrightarrow   \ 
\mathfrak{D} \models \phi(e) 
 \ \Rightarrow  \ D \vdash \phi(e)
\ \Rightarrow  \ D \vdash (\forall  x  \preceq e)\phi( x )\; .
\end{multline*}
The last implication holds by $D_5$.

Let $b\equiv t\circ 0$. Assume by induction hypothesis that
\begin{align*}
\mathfrak{D} \models (\forall  x  \preceq t)\phi( x ) 
\ \Rightarrow \ 
D \vdash (\forall  x  \preceq t)\phi( x )  .
\end{align*}
By the assumption on $\phi$ and the induction hypothesis, we have 
\begin{align*}
\mathfrak{D} \models (\forall  x  \preceq t \circ 0 )\phi( x ) \ \Leftrightarrow & \ 
\mathfrak{D} \models (\forall  x  \preceq t)[\phi( x )] \wedge \phi(t\circ 0)  
\\
\Rightarrow & \  D \vdash (\forall  x  \preceq t )[\phi( x )] \wedge \phi(t \circ 0) \\
\Rightarrow & \  D \vdash (\forall  x  \preceq t \circ 0) \phi( x ) \; .
\end{align*}
The last implication holds by $D_6$. 

The case $b\equiv t\circ 1$ is similar to the case $b\equiv t\circ 0$.
Use $D_7$ in place of $D_6$.
\qed
\end{proof}

\begin{lemma} \label{lemmabfem}
We have
\[\mathfrak{D} \models \neg b_{1} \preceq b_{2}  \ \Rightarrow \  
D\vdash \neg b_{1} \preceq b_{2}  \] 
for any biterals $b_{1}$ and $b_{2}$.
Furthermore, we have
\[\mathfrak{D} \models  \neg t_{1} \preceq t_{2}  \ \Rightarrow \ 
 D\vdash \neg t_{1} \preceq t_{2} \]
for any variable-free $\mathcal{L}_{BT}$-terms $t_{1}$ and $t_{2}$.
\end{lemma}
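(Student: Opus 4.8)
The plan is to mirror the proof of Lemma \ref{lemmaafem} (the substring analogue in $B$), but using the prefix axioms $D_5$, $D_6$, $D_7$, which are considerably simpler than $B_8$--$B_{11}$ since each recurses only once rather than twice. I would first establish the statement for biterals by induction on the structure of $b_2$, and then deduce the statement for arbitrary variable-free terms exactly as in the earlier lemmas. Recall that a biteral is either $e$ or of the form $t \circ 0$ or $t \circ 1$ for a biteral $t$, so these three cases exhaust the induction.

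For the base case $b_2 \equiv e$, I would observe that axiom $D_5$ gives $\mathfrak{D} \models \neg b_1 \preceq e$ iff $\mathfrak{D} \models \neg b_1 = e$; Lemma \ref{lemmabto} then yields $D \vdash \neg b_1 = e$, and a final appeal to $D_5$ gives $D \vdash \neg b_1 \preceq e$. For the inductive step with $b_2 \equiv t \circ 0$, the key observation is that negating the biconditional in $D_6$ turns $\neg b_1 \preceq t \circ 0$ into the conjunction $\neg b_1 = t \circ 0 \wedge \neg b_1 \preceq t$. Thus, assuming $\mathfrak{D} \models \neg b_1 \preceq t \circ 0$, I would read off both $\mathfrak{D} \models \neg b_1 = t \circ 0$ and $\mathfrak{D} \models \neg b_1 \preceq t$; the former gives $D \vdash \neg b_1 = t \circ 0$ by Lemma \ref{lemmabto}, and the latter gives $D \vdash \neg b_1 \preceq t$ by the induction hypothesis applied to the shorter biteral $t$. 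Combining these and applying $D_6$ once more yields $D \vdash \neg b_1 \preceq t \circ 0$. The case $b_2 \equiv t \circ 1$ is symmetric, using $D_7$ in place of $D_6$.

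To finish, for arbitrary variable-free terms $t_1, t_2$ with $\mathfrak{D} \models \neg t_1 \preceq t_2$, I would invoke Lemma \ref{lemmaben} to obtain biterals $b_1, b_2$ with $D \vdash t_1 = b_1 \wedge t_2 = b_2$; since $\mathfrak{D} \models D$, this forces $\mathfrak{D} \models \neg b_1 \preceq b_2$, whence the biteral case gives $D \vdash \neg b_1 \preceq b_2$ and substitution of equals gives $D \vdash \neg t_1 \preceq t_2$. I do not expect any genuine obstacle here: the only point requiring care is the correct logical decomposition of the negated instance of $D_6$ (namely that $\neg(x = t\circ 0 \vee x \preceq t)$ becomes $\neg x = t \circ 0 \wedge \neg x \preceq t$), so that each conjunct is discharged by its appropriate tool, the disequality by Lemma \ref{lemmabto} and the negated prefix by the induction hypothesis. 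This is precisely the step where the substring proof of Lemma \ref{lemmaafem} needed two recursive induction hypotheses, whereas the single recursion in the prefix axioms makes one suffice.
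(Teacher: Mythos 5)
Your proof is correct and follows essentially the same route as the paper's: induction on the structure of $b_2$, handling the base case via $D_5$ and Lemma \ref{lemmabto}, the inductive cases by negating the biconditionals $D_6$/$D_7$ and discharging the two conjuncts with Lemma \ref{lemmabto} and the induction hypothesis, and then reducing arbitrary variable-free terms to biterals via Lemma \ref{lemmaben}. One small point in your favour: your decomposition $\neg\, b_1 = t\circ 0 \;\wedge\; \neg\, b_1 \preceq t$ is the correct reading of the negated $D_6$, whereas the paper's text misprints this conjunction as $\neg\, b_1 \preceq t \;\wedge\; \neg\, b_1 = t$.
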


\begin{proof}
We proceed by induction on $b_{2}$. 

Let $b_{2}\equiv e$. Assume $\mathfrak{D} \models \neg b_{1} \sqsubseteq e$. 
Then $\mathfrak{D} \models \neg b_{1} = e$. By Lemma \ref{lemmabto}, we have $D \vdash \neg b_{1} = e$. 
By $D_5$, we have $D \vdash \neg b_{1} \preceq e$. 

Let $b_{2}\equiv t\circ 0$.
Assume $\mathfrak{D} \models \neg b_{1} \preceq b_{2}$. Then 
$\mathfrak{D} \models \neg b_{1} \preceq t \wedge \neg b_{1} = t$. 
By the induction hypothesis and Lemma \ref{lemmabto}, we have
$D \vdash \neg b_{1} \preceq t \wedge \neg b_{1} = t$. 
By $D_6$, we have 
$D \vdash \neg b_{1} \preceq b_{2}$.

The case  $b_{2}\equiv t\circ 1$ is similar to the case 
$b_{2}\equiv t\circ 0$. Use $D_7$ in place of $D_6$.

This proves that
\[
\mathfrak{D} \models \neg b_{1} \preceq b_{2} \ \Rightarrow \ D\vdash \neg b_{1} \preceq b_{2} \; .
\]
holds for any biterals $b_{1}, b_{2}$. It is easy to see that also the second part 
of the theorem holds (see e.g.\ the proof Lemma \ref{lemmaafem}).
\qed
\end{proof}

 Theorem \ref{dsigcomp} is  proved by induction on the structure of the $\Sigma$-sentence $\phi$.
Proceed as in the proof of Theorem  \ref{bsigcomp} (see Section \ref{bsigcompproof}) and use the lemmas above.


\begin{thebibliography}{AaKrRuud}

\bibitem{bs}
  B\"uchi, J. R.  and  Senger, S.:  
{\em Coding in the existential theory of concatenation.} Arch. math. Logik \textbf{26} (1986/7), 101-106.


\bibitem{day}
 Day, J.,  Ganesh,  V.,  He, P.,   Manea, F. and  Nowotka, D.:  {\em The satisfiability of extended word equations: 
The boundary between decidability and undecidability.} arXiv:1802.00523 (2018). 

\bibitem{ganesh}
 Ganesh,  V.,  Minnes,  M.,  Solar-Lezama,  A. and  Rinard, M. C.:  {\em Word equations with length constraints: What’s decidable?} 
In: Biere A., Nahir A., Vos T. (eds) Hardware and Software: Verification and Testing. 
HVC 2012. Lecture Notes in Computer Science, vol 7857,  pp. 209–226. Springer, Berlin, Heidelberg.




\bibitem{grz}
Grzegorczyk, A.: {\em Undecidability without arithmetization.}
Studia Logica
\textbf{79} (2005),  163-230.


\bibitem{zd}
Grzegorczyk, A. and Zdanowski, K.:
{\em Undecidability and concatenation.} pp. 72-91 in
``Andrzej Mostowski and Foundational Studies''
 (eds. by Ehrenfeucht et al.), IOS, Amsterdam, 2008. 

\bibitem{halfon}
Halfon, S.,  Schnoebelen, P. and Zetzsche G: {\em Decidability, complexity, and expressiveness of first-order 
logic over the subword ordering.} 
In {\em Proc. LICS 2017.} IEEE Computer Society, 1–12.

\bibitem{hori}
Horihata, Y.: {\em Weak theories of concatenation and arithmetic.}
Notre Dame Journal of Formal Logic, \textbf{53} (2012), 203-222.




\bibitem{karh}
Karhum\"aki, J.,  Mignosi, F. and  Plandowski, W.:  {\em The expressibility of languages and relations by word equations.} Journal of the ACM  \textbf{47}  (2000), 483–505.








\bibitem{leary}
Leary, C.  and   Kristiansen, L.: {\em A friendly introduction to mathematical logic.}
 2nd  Edition,  Milne  Library, SUNY Geneseo, Geneseo, NY, 2015.



\bibitem{makanin}
Makanin, G. S.: {\em  The problem of solvability of equations in a free semigroup.} 
Mathematics of the USSR-Sbornik  \textbf{32} (1977), 129-198. 

\bibitem{post}
Post, E. L.:
{\em A variant of a recursively unsolvable problem.}
 Bulletin of the American Mathematical Society \textbf{52} (1946), 264-268.

\bibitem{quine}
 Quine, W. V.:  {\em Concatenation as a basis for arithmetic.}
The Journal of Symbolic Logic
\textbf{11} (1946),  105-114. 



\bibitem{senger}
Senger, S.: {\em The existential theory of concatenation over a finite alphabet.} PhD dissertation, Purdue University (1982).


\bibitem{tarski}
Tarski, A.: {\em Der Wahrheitsbegriff in den formalisierten Sprachen.}
Studia Philosophica \textbf{1} (1935),  261-405. 

\bibitem{utarski}
Tarski, A.: {\em Undecidable theories.}
Studies in Logic and the Foundations of Mathematics. North-Holland Publishing Company, Amsterdam, 1953. In collaboration with
A.~Mostowski and R.~M.~Robinson.






\bibitem{visser}
Visser, A.: {\em Growing commas. A study of sequentiality and concatenation.}
Notre Dame Journal of Formal Logic, \textbf{50} (2009), 61-85.


\end{thebibliography}
\end{document}